\date{}
\theoremstyle{plain}
\newtheorem{theorem}{Theorem}[section]
\newtheorem{lemma}[theorem]{Lemma}
\newtheorem{proposition}[theorem]{Proposition}
\large \numberwithin{equation}{section}
\title[Singular eigenvalue problems on the circle]
{Singular eigenvalue problems on the circle}
\address{Institute of Mathematics NAS of Ukraine \\
    Tereshchenkivska str., 3 \\
    Kyiv\\
    Ukraine\\
    01601}
\author[V. A. Mikhailets and V. M. Molyboga]
       {Volodymyr A. Mikhailets and Volodymyr M. Molyboga}
\email{mikhailets@imath.kiev.ua} \email{molyboga@imath.kiev.ua}
\keywords{Singular potentials, periodic eigenvalues, uniform
asymptotic estimates} \subjclass[2000]{47A10, 47E05, 47N50}
\begin{document}
\begin{abstract}
The eigenvalue problem on the circle for the non-self-adjoint
operators \text{ } $L_{m}(V)=(-1)^{m}\frac{d^{2m}}{dx^{2m}}+V$,
$m\in \mathbb{N}$ with singular complex-valued 2-periodic
distributions $V\in H_{per}^{-m}[-1,1]$ is studied. Asymptotic
formulae for the eigenvalues uniformly in $V$ in the space
$H_{per}^{m}[-1,1]$ and local uniformly in $V$ in the space
$H_{per}^{-m}[-1,1]$ are found.
\end{abstract}
\maketitle
\section{Introduction}\label{int}
Let introduce the complex Sobolev spaces $H_{per}^{s}[-1,1]$,
$s\in \mathbb{R}$, of 2-periodic functions or distributions. They
are defined by means their Fourier coefficients:
\begin{equation*}\label{eq_10}
  H_{per}^{s}[-1,1]:=\left\{f=\sum_{k\in\mathbb{Z}}\hat{f}(k)e^{ik\pi
  x}\left|\;\parallel
  f\parallel_{s}<\infty\right.\right\},
\end{equation*}
where
\begin{align*}\label{eq_12}
    & \parallel
  f\parallel_{s}:=\left(\sum_{k\in\mathbb{Z}}
  \langle k\rangle^{2s}\mid\hat{f}(k)\mid ^{2}\right)^{1/2},\quad \langle
  k\rangle:=1+|k|, \\
    & \hat{f}(k):=\langle f,e^{ik\pi x}\rangle, \quad k\in \mathbb{Z}.
\end{align*}
The brackets denote the sesquilinear  pairing between dual spaces
$H_{per}^{s}[-1,1]$ and $H_{per}^{-s}[-1,1]$ extending the
$L_{per}^{2}[-1,1]$-inner product
\begin{equation*}\label{eq_14}
  \langle f,g\rangle:=\frac{1}{2}\int_{-1}^{1}f(x)\overline{g(x)}\,dx, \quad
  f,g\in L_{per}^{2}[-1,1].
\end{equation*}

In the paper  we study the eigenvalue problem for the
non-self-adjoint differential operators
\begin{equation}\label{eq_16}
  L\equiv L_{m}(V):=(-1)^{m}\frac{d^{2m}}{dx^{2m}}+V,\quad m\in\mathbb{N}
\end{equation}
with the singular complex-valued distributions $V(x)\in
H_{per}^{-m}[-1,1]$. The operators $L$ are well defined as
unbounded linear operators in the Hilbert space
$H_{per}^{-m}[-1,1]$ with the inner product
\begin{equation*}\label{eq_18}
  \langle f,g\rangle_{-m}:=\sum_{k\in\mathbb{Z}}\langle
  k\rangle^{-2m}\hat{f}(k)\overline{\hat{g}(k)}
\end{equation*}
and the domain
\begin{equation*}\label{eq_20}
    D(L)=H_{per}^{m}[-1,1].
\end{equation*}
Similarly as for the functions $V(x)\in H_{per}^{0}[-1,1]$, it
turns out that the spectrum \emph{spec}(L) of the operator $L$
when considered on the interval $[-1,1]$ and with periodic
boundary conditions is discrete and consists of a sequence of the
eigenvalues
\begin{equation*}\label{eq_22}
    \lambda_{k}=\lambda_{k}(m,V), \quad k\in\mathbb{Z_{+}}
\end{equation*}
with the property that
\begin{equation}\label{eq_24}
    Re\lambda_{k}\rightarrow +\infty\quad \mbox{as} \quad k
    \rightarrow +\infty.
\end{equation}
Here the eigenvalues $\lambda_{k}$ are enumerated with their
algebraic multiplicities and ordered lexicographically, so that
\begin{equation*}\label{eq_12}
    Re(\lambda_k)<Re(\lambda_{k+1}),\quad \mbox{or} \quad
    Re(\lambda_k)=Re(\lambda_{k+1})\quad and \quad Im(\lambda_k)\leq
    Im(\lambda_{k+1}), \quad k\in\mathbb{Z_{+}}.
\end{equation*}
One can prove that operator $L$ is self-adjoint in the Hilbert
space $H_{per}^{-m}[-1,1]$ if and only if the 2-periodic
distribution $V(x)$ is real-valued, i. e.
\begin{equation*}\label{eq_8}
    \hat{V}(n)=\overline{\hat{V}(-n)}, \quad n\in\mathbb{Z}.
\end{equation*}

The aim of the paper is to find asymptotic formulae for the
eigenvalues $({\lambda_k})_{k\geq 0}$ which are uniform in $V$ on
appropriate sets of distributions. The case $m=1$ was studied in
\cite{R2, R6} using the same approach.

The following two theorems are the main results of the paper.
\begin{theorem}\label{th_10}
For any $V\in H_{per}^{-m}[-1,1]$, there exist $\varepsilon>0$,
$M=M(\varepsilon)\geq1$ and $n_{0}=n_{0}(\varepsilon)\in
\mathbb{N}$ such that for any $W\in H_{per}^{-m}[-1,1]$ with
\begin{equation*}\label{eq_25}
    \|W-V\|_{-m}\leq\varepsilon
\end{equation*}
the spectrum of the operator $L_{m}(W)$ satisfies the estimates:
\begin{itemize}
    \item [(a)] There are precisely $2n_0-1$ eigenvalues inside
    the bounded cone
    $$
    T_{M,n_0}=
    \left\{\lambda\in\mathbb{C}\,\left|\,\right.|Im\,\lambda|-M\leq Re\,\lambda \leq
    (n_{0}^{2m}-n_{0}^{m})\pi^{2m}\right\}.
    $$
    \item [(b)] For any $n\geq n_0$ the pairs of eigenvalues $\lambda_{2n-1}(W)$,
     $\lambda_{2n}(W)$ are inside a disc around $n^{2m}\pi^{2m}$ of the radius
     $n^{m}$:
     \begin{align*}\label{eq_26}
      & |\lambda_{2n-1}(W)-n^{2m}\pi^{2m}|<n^{m}, \\
      &  |\lambda_{2n}(W)-n^{2m}\pi^{2m}|<n^{m}.
     \end{align*}
\end{itemize}
\end{theorem}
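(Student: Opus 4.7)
The approach is perturbation-theoretic: treat $L_{m}(W)$ as a perturbation of the free operator $L_{0}:=L_{m}(0)=(-1)^{m}d^{2m}/dx^{2m}$. Diagonalizing $L_{0}$ in the Fourier basis $\{e^{ik\pi x}\}_{k\in\mathbb{Z}}$ gives its spectrum in $H_{per}^{-m}[-1,1]$ as $\{n^{2m}\pi^{2m}\}_{n\ge 0}$, with $0$ simple and every $n^{2m}\pi^{2m}$, $n\ge 1$, a double eigenvalue spanned by $e^{\pm in\pi x}$. Thus the theorem holds trivially for $W=0$ (the $1+2(n_{0}-1)=2n_{0}-1$ eigenvalues $0,\pi^{2m},\dots,(n_{0}-1)^{2m}\pi^{2m}$ lie in $T_{M,n_{0}}$ once $M$ is taken large), and the task is to show that this picture persists under the perturbation.

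I would introduce the circles $\Gamma_{n}:=\{\lambda\in\mathbb{C}:|\lambda-n^{2m}\pi^{2m}|=n^{m}\}$ for $n\ge n_{0}$ and a simple closed contour $\Gamma_{0}$ whose interior is $T_{M,n_{0}}$. The plan is to prove the resolvent estimate
\[
  \|W(L_{0}-\lambda)^{-1}\|_{H_{per}^{-m}\to H_{per}^{-m}}<1/2
  \qquad(\lambda\in\Gamma_{n}\cup\Gamma_{0},\ n\ge n_{0}),
\]
uniformly in $W$ lying in the prescribed ball. Once this is in hand, Neumann inversion of $L_{m}(W)-\lambda=(I+W(L_{0}-\lambda)^{-1})(L_{0}-\lambda)$ makes $(L_{m}(W)-\lambda)^{-1}$ norm-continuous in $W$ on each contour, so the Riesz projection $P_{n}(W):=-(2\pi i)^{-1}\oint_{\Gamma_{n}}(L_{m}(W)-\lambda)^{-1}d\lambda$ is too. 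Since $\operatorname{rank}P_{n}(0)=2$ for $n\ge n_{0}$ and $\operatorname{rank}P_{0}(0)=2n_{0}-1$, one obtains $\operatorname{rank}P_{n}(W)=2$ and $\operatorname{rank}P_{0}(W)=2n_{0}-1$, yielding items (b) and (a) of the theorem.

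The hard part is the resolvent estimate. Because $(L_{0}-\lambda)^{-1}$ has Fourier symbol $(k^{2m}\pi^{2m}-\lambda)^{-1}$, which on $\Gamma_{n}$ attains magnitude $n^{-m}$ at the modes $k=\pm n$ but only $\sim n^{1-2m}$ at neighboring modes, a naive composition with the multiplier bound $\|W\cdot\|_{H_{per}^{m}\to H_{per}^{-m}}\le C\|W\|_{-m}$ yields the useless estimate $O(n^{m}\|W\|_{-m})$. I would instead work with a symmetrized object: writing $\langle D\rangle$ for the Fourier multiplier with symbol $\langle k\rangle$, the operator $\langle D\rangle^{-m}W\langle D\rangle^{-m}$ is Hilbert--Schmidt on $L_{per}^{2}[-1,1]$ with norm bounded by $C\|W\|_{-m}$, and the factorization
\[
  W(L_{0}-\lambda)^{-1}=\langle D\rangle^{m}\bigl(\langle D\rangle^{-m}W\langle D\rangle^{-m}\bigr)\langle D\rangle^{m}(L_{0}-\lambda)^{-1}
\]
combined with the sharper bound on $\langle D\rangle^{m}(L_{0}-\lambda)^{-1}$ off the resonant pair $k=\pm n$ produces the needed decay as $n\to\infty$. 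The radius $n^{m}$ of $\Gamma_{n}$ is calibrated exactly so as to absorb the contribution of those two modes. Carrying out this Hilbert--Schmidt estimate uniformly is the main technical obstacle.

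For the local uniformity in $V$, I would use Fourier truncation. Given $\varepsilon>0$, choose $N$ so that $\|V-V_{\le N}\|_{-m}<\varepsilon/3$, where $V_{\le N}$ denotes the partial Fourier sum. Then for every $W$ with $\|W-V\|_{-m}\le\varepsilon$ one has $\|W-V_{\le N}\|_{-m}\le 4\varepsilon/3$, so the estimate of the previous paragraph applies to the tail $W-V_{\le N}$ uniformly in $W$ and in $n\ge n_{1}$ for some $n_{1}=n_{1}(\varepsilon)$. The smooth trigonometric polynomial $V_{\le N}$ is a relatively compact perturbation that shifts only finitely many eigenvalues by a bounded amount; enlarging $n_{0}\ge n_{1}$ and $M=M(\varepsilon)$ appropriately absorbs all of those shifts into $T_{M,n_{0}}$ and preserves the disc estimates in~(b).
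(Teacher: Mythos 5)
Your overall architecture --- Riesz projections over circles of radius $n^{m}$ about $n^{2m}\pi^{2m}$ plus one bounded contour, combined with a decomposition of $V$ into a smooth trigonometric part and an $\varepsilon$-small tail to get the local uniformity --- is the same as the paper's. But the central estimate you rely on, $\|W(L_{0}-\lambda)^{-1}\|_{\mathcal{L}(H_{per}^{-m})}<1/2$ for $\lambda\in\Gamma_{n}$ uniformly in $n$, is false for general $W\in H_{per}^{-m}[-1,1]$, and the Hilbert--Schmidt symmetrization does not rescue it. Test the operator on the resonant mode itself: $(L_{0}-\lambda)^{-1}e^{in\pi x}=(n^{2m}\pi^{2m}-\lambda)^{-1}e^{in\pi x}$ has coefficient of modulus $n^{-m}$ on $\Gamma_{n}$, while $\|e^{in\pi x}\|_{-m}=\langle n\rangle^{-m}$, so
\begin{equation*}
\frac{\|W(L_{0}-\lambda)^{-1}e^{in\pi x}\|_{-m}}{\|e^{in\pi x}\|_{-m}}\asymp\|We^{in\pi x}\|_{-m}\geq|\hat{W}(-n)| ,
\end{equation*}
the last inequality being the $k=0$ term of $\|We^{in\pi x}\|_{-m}^{2}=\sum_{k}\langle k\rangle^{-2m}|\hat{W}(k-n)|^{2}$. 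Membership in $H_{per}^{-m}$ only forces $\sum_{l}\langle l\rangle^{-2m}|\hat{W}(l)|^{2}<\infty$, which allows $|\hat{W}(-n)|\sim\langle n\rangle^{m-1/2}/\log\langle n\rangle\to\infty$. So the resolvent factor is not uniformly bounded on the contours, Neumann inversion of $(I+W(L_{0}-\lambda)^{-1})(L_{0}-\lambda)$ fails, and the rank-continuity argument collapses. Your symmetrized factorization meets the same wall: $\langle D\rangle^{m}(L_{0}-\lambda)^{-1}\langle D\rangle^{m}$ has symbol $\langle k\rangle^{2m}/(k^{2m}\pi^{2m}-\lambda)$, which is of size $n^{m}$ at $k=\pm n$; the radius $n^{m}$ is what \emph{produces} this factor, it does not absorb it. You correctly flag this as ``the main technical obstacle,'' but the proof of the theorem \emph{is} the resolution of that obstacle, and your sketch does not contain it.

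What the paper does instead is structurally different at exactly this point. First the smooth part $v_{0}$ is absorbed into the unperturbed operator, $L_{\lambda}=\lambda-D_{m}-B_{0}$ (for $v_{0}\in h^{m}$ multiplication is bounded on $h^{-m}$, so $\|B_{0}(\lambda-D_{m})^{-1}\|_{\mathcal{L}(h^{-m})}=O(n^{-m})$ on $Vert_{n}^{m}(n^{m})$ and $L_{\lambda}$ inverts by a genuine Neumann series). The small rough remainder $B_{1}$ is then handled in the shifted weighted spaces $h^{s,\pm n}$ via the key bound $\|(\lambda-D_{m})^{-1}\|_{\mathcal{L}(h^{-m,n},h^{m,-n})}=O(1)$ and the \emph{second-order} expansion $L_{\lambda}^{-1}+L_{\lambda}^{-1}T_{\lambda}(B_{1}L_{\lambda}^{-1})+L_{\lambda}^{-1}T_{\lambda}(B_{1}L_{\lambda}^{-1})^{2}$ with $T_{\lambda}=\sum_{l\geq0}(B_{1}L_{\lambda}^{-1})^{2l}$: a single factor $B_{1}L_{\lambda}^{-1}$ is not small on $h^{-m}$, but pairs of factors are small on $h^{-m,n}$, and the outer factors $L_{\lambda}^{-1}\in\mathcal{L}(h^{-m,n},h^{-m})$ and $B_{1}L_{\lambda}^{-1}\in\mathcal{L}(h^{-m},h^{-m,n})$ exchange the $O(n^{m})$ loss against an $O(n^{-m})$ gain. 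Finally, your assertion that the trigonometric polynomial $V_{\leq N}$ ``shifts only finitely many eigenvalues by a bounded amount'' is not justified --- a relatively compact perturbation moves \emph{all} eigenvalues --- and the missing ingredient is the quantitative result for $h^{0}$-potentials (Proposition \ref{pr_10}, from \cite{R5}) that every large eigenvalue of $D_{m}+B_{0}$ stays within $(3^{m}\sqrt{2}+1)\|v_{0}\|_{h^{0}}$ of $n^{2m}\pi^{2m}$, hence eventually inside the disc of radius $n^{m}$.
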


In the case $m=1$ theorem \ref{th_10} has been proved in \cite{R6}
(see also \cite{R2}).
\begin{theorem}\label{th_12}
Let $R\geq 0$, $V\in H_{per}^{-m}[-1,1]$. For any $W\in
H_{per}^{-m}[-1,1]$ with
\begin{equation*}\label{eq_28}
    \|W-V\|_{m}\leq R
\end{equation*}
the eigenvalues $(\lambda_{k}(m,W))_{k\geq0}$  satisfy the
asymptotic formula
\begin{equation}\label{eq_30}
    \lambda_{2n-1}(m,W),\lambda_{2n}(m,W)=n^{2m}\pi^{2m}+o(n^{m}),\quad
    n\rightarrow\infty
\end{equation}
uniformly in $W$.
\end{theorem}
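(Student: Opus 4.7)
The plan is to refine the $O(n^{m})$ localization of Theorem \ref{th_10} to an $o(n^{m})$ asymptotic via a Lyapunov--Schmidt reduction around each unperturbed eigenvalue $n^{2m}\pi^{2m}$, exploiting the extra smoothness of the perturbation: while $V\in H^{-m}_{per}$ is singular, the difference $U:=W-V$ lives in the much smaller space $H^{m}_{per}$ with $\|U\|_{m}\le R$.

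Fix $\varepsilon>0$; the goal is to find $N=N(\varepsilon,V,R)$ such that for $n\ge N$ and every $W$ with $\|W-V\|_{m}\le R$, both $\lambda_{2n-1}(W)$ and $\lambda_{2n}(W)$ lie in the disc $|\lambda-n^{2m}\pi^{2m}|<\varepsilon n^{m}$. Let $L_{0}=(-1)^{m}d^{2m}/dx^{2m}$, let $P_{n}^{0}$ be the rank-two spectral projection of $L_{0}$ onto $\mathrm{span}\{e^{\pm in\pi x}\}$, and set $Q_{n}^{0}=I-P_{n}^{0}$. On the contour $\Gamma_{n}=\{|\lambda-n^{2m}\pi^{2m}|=\varepsilon n^{m}\}$ the restricted resolvent is small, $\|(\lambda-L_{0})^{-1}Q_{n}^{0}\|_{H^{-m}\to H^{-m}}\lesssim n^{-(2m-1)}$, because the nearest other eigenvalue of $L_{0}$ lies at distance $\gtrsim n^{2m-1}$. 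Form the Riesz projection $P_{n}(W)=\frac{1}{2\pi i}\oint_{\Gamma_{n}}(\lambda-L_{m}(W))^{-1}d\lambda$ via the Neumann expansion of $(I-W(\lambda-L_{0})^{-1})^{-1}$; a homotopy argument in $tW$, $t\in[0,1]$, together with continuity of the spectral projection, yields $\mathrm{rank}\,P_{n}(W)=2$, so $\Gamma_{n}$ encloses exactly $\lambda_{2n-1}(W),\lambda_{2n}(W)$. Conjugation by an isomorphism $P_{n}(W)H^{-m}_{per}\simeq P_{n}^{0}H^{-m}_{per}$ produces a $2\times 2$ matrix $\widetilde L_{n}$ with the same eigenvalues, admitting a Neumann representation
\begin{equation*}
\widetilde L_{n}=n^{2m}\pi^{2m}P_{n}^{0}+P_{n}^{0}WP_{n}^{0}+P_{n}^{0}W(\lambda-L_{0})^{-1}Q_{n}^{0}WP_{n}^{0}+\cdots.
\end{equation*}

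In the basis $\{e^{in\pi x},e^{-in\pi x}\}$ the leading block is
\begin{equation*}
P_{n}^{0}WP_{n}^{0}=\begin{pmatrix}\hat W(0) & \hat W(2n)\\ \hat W(-2n) & \hat W(0)\end{pmatrix},
\end{equation*}
with eigenvalues $\hat W(0)\pm\sqrt{\hat W(2n)\hat W(-2n)}$. Splitting $\hat W(k)=\hat V(k)+\hat U(k)$: the hypothesis $\|U\|_{m}\le R$ gives $|\hat U(k)|\le R\langle k\rangle^{-m}$ pointwise, so $|\hat U(\pm 2n)|=O(n^{-m})$ uniformly in $U$; and $V\in H^{-m}_{per}$ forces $\langle k\rangle^{-m}|\hat V(k)|\to 0$ as $|k|\to\infty$, hence $|\hat V(\pm 2n)|=o(n^{m})$ (uniformly, since $V$ is fixed). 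Together with $\hat W(0)=O(1)=o(n^{m})$, the leading block already contributes $o(n^{m})$.

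The main obstacle I expect is controlling, uniformly over the $H^{m}$-ball, the higher-order Neumann corrections, whose typical matrix entries take the form
\begin{equation*}
\sum_{j\neq\pm n}\frac{\hat W(\pm n-j)\hat W(j\mp n)}{(j\pi)^{2m}-\lambda}
\end{equation*}
and higher convolutions. I would attack these by Cauchy--Schwarz combined with the gap estimate $|(j\pi)^{2m}-\lambda|\gtrsim n^{2m-1}$ for $j\neq\pm n$ and $\lambda\in\Gamma_{n}$, together with the splitting $V=V_{\le N_{0}}+V_{>N_{0}}$ where $N_{0}=N_{0}(\varepsilon)$ is chosen so that $\|V_{>N_{0}}\|_{-m}$ is as small as desired. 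The contribution of the fixed trigonometric polynomial $V_{\le N_{0}}$ couples only to modes $|j|\le n+N_{0}$ and is easily $o(n^{m})$ for large $n$, while the tail $V_{>N_{0}}+U$ is controlled by its small $H^{-m}$-norm and the uniform decay $|\hat U(k)|\le R\langle k\rangle^{-m}$. Securing absolute convergence of the Neumann series with every term $o(n^{m})$ uniformly in $W$ is the technical heart of the argument.
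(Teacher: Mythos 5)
Your overall skeleton --- Riesz projections around each doublet, a homotopy to pin the eigenvalue count at $2$, a splitting of $V$ into a smooth part plus an $H^{-m}$-small tail, and the observation that $W-V\in H^{m}$ contributes only $O(n^{-m})$ to the off-diagonal Fourier data --- is consistent with what the paper does (it proves Theorem \ref{th_12} via Proposition \ref{pr_14}, showing that for every $\delta\in(0,1]$ the whole region $Vert^{m}_{n}(\delta n^{m})$ lies in the resolvent set for $n\ge n_{0}(\delta)$, uniformly in $w$, so that no explicit $2\times2$ reduction is needed). Your first-order computation and the estimates $|\hat U(\pm 2n)|=O(n^{-m})$, $|\hat V(\pm 2n)|=o(n^{m})$ are correct.

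The genuine gap is exactly the step you defer to the end: convergence of the Neumann series and $o(n^{m})$ control of the higher-order terms when the potential is only in $H^{-m}$. The tools you name would not close it. Multiplication by $W\in H^{-m}$ is bounded only from $H^{m}$ to $H^{-m}$, so each factor $W(\lambda-L_{0})^{-1}Q_{n}^{0}$ in your expansion must be measured through the norm $\|(\lambda-L_{0})^{-1}Q_{n}^{0}\|_{\mathcal{L}(H^{-m},H^{m})}$, which is $\sup_{k\neq\pm n}\langle k\rangle^{2m}/|\lambda-k^{2m}\pi^{2m}|\sim n$ (attained at $k=n\pm1$), not $n^{-(2m-1)}$; your quoted bound is the $H^{-m}\to H^{-m}$ norm, which cannot be composed with $W$. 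Hence each Neumann factor costs $O(n\|W\|_{-m})$, the series diverges, and already the second-order block is only $O(n)$ --- which for $m=1$ is $O(n^{m})$, not $o(n^{m})$. A direct Cauchy--Schwarz on $\sum_{j\neq\pm n}\hat W(n-j)\hat W(j-n)/((j\pi)^{2m}-\lambda)$ fails for the same reason: writing $|\hat W(k)|=\langle k\rangle^{m}c_{k}$ with $(c_{k})\in\ell^{2}$, the near-diagonal terms $j=n+l$ contribute $\sum_{l\neq0}\langle l\rangle^{2m}c_{l}c_{-l}/(|l|\,n^{2m-1})$, which is not controlled by $\|c\|_{\ell^{2}}^{2}$. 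The paper's resolution of precisely this obstruction is the shifted weighted spaces $h^{s,n}$, the Convolution Lemma (whose bounds are uniform in the shift $n$), the resolvent bound $\|(\lambda-D_{m})^{-1}\|_{\mathcal{L}(h^{-m,n},h^{m,-n})}$ of Lemma \ref{l_18}, and the rearrangement $T_{\lambda}=\sum_{l\ge0}(B_{1}L_{\lambda}^{-1})^{2l}$ summed in $\mathcal{L}(h^{-m,n})$, where the relevant norm is $O(\varepsilon)$ uniformly in $n$; the smooth part $v_{0}+(w-v)$ is absorbed into $L_{\lambda}$ beforehand. Without some equivalent of this shifted-norm mechanism, your argument does not go through.
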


The estimates \eqref{eq_30} is novel in the case $m=1$ as well. In
the case $m=1$ and $V\in H_{per}^{k}[-1,1]$, $k\in\mathbb{Z_{+}}$
an asymptotic behavior of the eigenvalues $\lambda_{k}(1,V)$ has
been investigated in monograph \cite{R4}. The case $V\in
H_{per}^{-m\alpha}[-1,1]$, $\alpha\in [0,1)$ was earlier studied
in \cite{R5}.
\section{Preliminary Results}
A purpose of this section to prove some qualitative results
concerning the operator $L_{m}(V)$. More precisely we are going to
prove the following statement.
\begin{theorem}\label{th_14}
The operator $L_{m}(V)$, $m\in\mathbb{R}$, $V(x)\in
H_{per}^{-m}[-1,1]$ is well defined as an unbounded linear
operator in the Hilbert space $H_{per}^{-m}[-1,1]$ with the domain
\begin{equation*}\label{eq_32}
    D(L_{m})=H_{per}^{m}[-1,1].
\end{equation*}

Moreover:
\begin{enumerate}
    \item [(a)] The operator $L_{m}(V)$ is closed.
    \item [(c)] A resolvent set of the operator $L_{m}(V)$ is not
    empty and the resolvent $R(\lambda,L_{m}(V))$ is a
    compact operator.
\end{enumerate}
\end{theorem}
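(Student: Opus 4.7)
The plan is to treat $L_m(V)=L_0+V$, where $L_0:=(-1)^m d^{2m}/dx^{2m}$ is the free operator, as a perturbation in the Sobolev scale: both $L_0$ and the multiplication by $V$ are viewed as bounded maps $H^m_{per}[-1,1]\to H^{-m}_{per}[-1,1]$, and $L_m(V)$ is treated as an unbounded operator in $H^{-m}_{per}[-1,1]$ with domain $H^m_{per}[-1,1]$.

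The first step, and the algebraic heart of the matter, is to show that multiplication by a fixed $V\in H^{-m}_{per}[-1,1]$ is a \emph{compact} operator from $H^m_{per}[-1,1]$ to $H^{-m}_{per}[-1,1]$. Since $H^m_{per}[-1,1]$ is a Banach algebra closed under complex conjugation (as $m\geq 1$), the product $\bar f g$ lies in $H^m_{per}[-1,1]$ whenever $f,g$ do, and one defines $Vf\in H^{-m}_{per}[-1,1]$ by the duality identity $\langle Vf,g\rangle=\langle V,\bar f g\rangle$, which yields $\|Vf\|_{-m}\leq C\|V\|_{-m}\|f\|_m$. Compactness is then obtained by approximating $V$ in $H^{-m}_{per}[-1,1]$ by trigonometric polynomials $V_n$: for each smooth $V_n$, multiplication sends $H^m_{per}[-1,1]$ into itself, so composition with the compact embedding $H^m_{per}[-1,1]\hookrightarrow H^{-m}_{per}[-1,1]$ produces a compact operator $H^m_{per}\to H^{-m}_{per}$, and the preceding estimate shows that $V$ is the operator-norm limit of these compacts.

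For the free part, $L_0$ is the Fourier multiplier with symbol $(k\pi)^{2m}$, so $\lambda-L_0:H^m_{per}[-1,1]\to H^{-m}_{per}[-1,1]$ is a Hilbert space isomorphism for every $\lambda\notin\{(k\pi)^{2m}:k\in\mathbb{Z}\}$. Using this we factor, as operators $H^m_{per}\to H^{-m}_{per}$,
$$
\lambda-L_m(V)=(\lambda-L_0)\bigl[\,I-(\lambda-L_0)^{-1}V\,\bigr],
$$
where $(\lambda-L_0)^{-1}V$ is compact on $H^m_{per}[-1,1]$ by step one. To produce a point in $\rho(L_m(V))$ I would take $\lambda=-\mu$ with $\mu\to+\infty$: a direct Fourier-side inspection shows that $(\lambda-L_0)^{-1}:H^{-m}_{per}\to H^m_{per}$ remains uniformly bounded in $\mu$, while for every fixed $g$ one has $\|(\lambda-L_0)^{-1}g\|_m\to 0$ by dominated convergence on Fourier coefficients. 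Combining this strong convergence to zero with the compactness of $V:H^m_{per}\to H^{-m}_{per}$ established above, the standard ``compact composed with strongly null'' principle upgrades the convergence to operator norm: $\|(\lambda-L_0)^{-1}V\|_{H^m\to H^m}\to 0$. A Neumann series then inverts $I-(\lambda-L_0)^{-1}V$ for $\mu$ sufficiently large, giving a bounded inverse $(\lambda-L_m(V))^{-1}:H^{-m}_{per}[-1,1]\to H^m_{per}[-1,1]$ and placing $\lambda$ in $\rho(L_m(V))$.

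Composing this resolvent with the compact embedding $H^m_{per}\hookrightarrow H^{-m}_{per}$ yields compactness of $R(\lambda,L_m(V))$ on $H^{-m}_{per}[-1,1]$, which is item~(c); closedness of $L_m(V)$, item~(a), follows immediately because the densely defined operator $\lambda-L_m(V)$ is the inverse of a bounded everywhere-defined operator. The main obstacle is concentrated in the first step: mere boundedness of $V$ as $H^m\to H^{-m}$ would be insufficient, since $(\lambda-L_0)^{-1}$ does not converge to zero in operator norm as $\mu\to\infty$; the perturbation argument of step three rests essentially on the compactness of the multiplication map.
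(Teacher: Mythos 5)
Your argument is correct, but it takes a genuinely different route from the paper. The paper proves (its Lemma 2.3) that multiplication by $V$ is $L_m(0)$-bounded with relative bound $0$: it splits $V=V_0+V_\delta$ with $V_0\in H^m_{per}[-1,1]$ and $\|V_\delta\|_{-m}$ small, and combines the two Convolution Lemma estimates $\|V_0u\|_{-m}\le C_m\|V_0\|_m\|u\|_{-m}$ and $\|V_\delta u\|_{-m}\le C_m\|V_\delta\|_{-m}\|u\|_m$ with $\|u\|_m\le\|u\|_{-m}+\|L_m(0)u\|_{-m}$; closedness and compactness of the resolvent are then obtained by citing Kato's perturbation theorems (Ch.~IV, Theorems 1.11 and 3.17) applied to the self-adjoint operator $L_m(0)$, which already has compact resolvent. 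You instead prove the stronger statement that $V\colon H^m_{per}\to H^{-m}_{per}$ is \emph{compact} (via the same smoothing $V\approx V_n$ and the same bilinear estimate, packaged as an operator-norm limit of compacts), and then exhibit a resolvent point by hand at $\lambda=-\mu$ through the factorization $\lambda-L_m(V)=(\lambda-L_0)\bigl[I-(\lambda-L_0)^{-1}V\bigr]$ and a Neumann series, using the ``compact composed with uniformly bounded, strongly null'' principle to force $\|(\lambda-L_0)^{-1}V\|_{\mathcal{L}(H^m)}\to 0$. Your route is self-contained and produces explicit points of the resolvent set on the negative real axis, whereas the paper's route is shorter given the cited machinery and isolates exactly the relative-bound-$0$ hypothesis those theorems require. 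Note that compactness of $V$ relative to the graph norm of $L_m(0)$ in fact implies the paper's relative bound $0$, so both proofs ultimately rest on the same analytic core (the two Convolution Lemma estimates and the density of $H^m_{per}$ in $H^{-m}_{per}$); your closing remark is apt, since mere boundedness of $V\colon H^m\to H^{-m}$ would suffice neither for your Neumann-series step nor for the paper's Lemma 2.3.
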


To prove the theorem \ref{th_14} we need two preliminary lemmas.

As well known the space $L_{per}^{2}[-1,1]$ can be isometric
identified with the sequence space $l^{2}(\mathbb{Z})$ by means of
Fourier coefficients of a function $f(x)\in H^{0}_{per}[-1,1]$.
Similarly by the Fourier transform the spaces $H^{s}_{per}[-1,1]$
identifies with sequence spaces. For any $n\in\mathbb{Z}$, and
$s\in\mathbb{R}$ we can define the weighted $l^{2}$-spaces  by
$$h^{s,n}\equiv h^{s,n}(\mathbb{Z};\mathbb{C}).$$

This space is the Hilbert space sequences $(a(k))_{k\in
\mathbb{Z}}$ in $\mathbb{C}$ with norm
\begin{equation*}\label{eq_34}
  \parallel
  a\parallel_{h^{s,n}}:=\left(\sum_{k\in\mathbb{Z}}
  \langle k+n\rangle^{2s}\mid\hat{a}(k)\mid ^{2}\right)^{1/2}.
\end{equation*}
For n=0 we will simply write $h^{s}$ instead of $h^{s,0}$.

Further, the map $$f\longmapsto(\hat{f}(k))_{k\in \mathbb{Z}}$$ is
an isometric isomorphism of the space $H^{s}_{per}[-1,1]$ onto
$h^{s}$, $s\in \mathbb{R}$. For this isomorphism, multiplication
of functions corresponds to convolution of sequences, where the
convolution product of two sequences $a=(a(k))_{k\in \mathbb{Z}}$
and $b=(b(k))_{k\in \mathbb{Z}}$ (formally) defined as the
sequence given by
\begin{equation}\label{eq_36}
    (a*b)(k):=\sum_{j\in\mathbb{Z}}a(k-j)b(j).
\end{equation}
So, given two functions $u$, $v$ formally,
\begin{equation}\label{eq_38}
    (\widehat{u\cdot
    v})(k)=\sum_{j\in\mathbb{Z}}\hat{u}(k-j)\hat{v}(j).
\end{equation}
The following Convolution Lemma is a starting point of our method.
\begin{lemma}[Convolution Lemma, \cite{R2}]\label{l_10}
Let $n\in\mathbb{Z}$, $s,r\geq0$, and $\;t\in\mathbb{R}$ with
$t\leq\min(s,r)$. If $s+r-t>1/2$, than the convolution map is
continuous (uniformly in n), when viewed as a map
    \begin{itemize}
        \item [(a)] $h^{r,n}\times h^{s,-n}\longrightarrow h^{t}$,
        \item [(b)] $h^{-t}\times h^{s,n}\longrightarrow h^{-r,n}$,
        \item [(c)] $h^{t}\times h^{-s,n}\longrightarrow h^{-r,n}$.
    \end{itemize}
\end{lemma}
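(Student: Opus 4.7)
The plan is to translate each of the three convolution estimates into a single Schur-type bound on a weighted $l^{2}$ sum in which the shift $n$ drops out after a change of variables.

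For (a), I would set $A(k):=\langle k+n\rangle^{r}|a(k)|$ and $B(k):=\langle k-n\rangle^{s}|b(k)|$, so that $\|A\|_{2}=\|a\|_{h^{r,n}}$ and $\|B\|_{2}=\|b\|_{h^{s,-n}}$. Writing $(a*b)(k)=\sum_{j}a(k-j)b(j)$ and applying Cauchy--Schwarz in $j$ with the splitting $\langle k-j+n\rangle^{-r}\langle j-n\rangle^{-s}=[\langle k-j+n\rangle^{-r_{1}}\langle j-n\rangle^{-s_{1}}]\cdot[\langle k-j+n\rangle^{-r_{2}}\langle j-n\rangle^{-s_{2}}]$ for some $r_{i},s_{i}\geq 0$ with $r_{1}+r_{2}=r$ and $s_{1}+s_{2}=s$, one obtains
\[
|(a*b)(k)|^{2}\leq\Bigl(\sum_{j}\langle k-j+n\rangle^{-2r_{1}}\langle j-n\rangle^{-2s_{1}}\Bigr)(A^{2}*B^{2})(k).
\]
The substitution $q=j-n$ eliminates $n$ from the first factor entirely, and the classical kernel bound
\[
\sum_{q\in\mathbb{Z}}\langle k-q\rangle^{-a}\langle q\rangle^{-b}\;\leq\;\frac{C}{\langle k\rangle^{\min(a,\,b,\,a+b-1)}}\qquad(a+b>1,\;a,b\geq 0)
\]
reduces the task to finding $r_{1},s_{1}$ satisfying
\[
0\leq r_{1}\leq r,\quad 0\leq s_{1}\leq s,\quad r_{1}+s_{1}>\tfrac{1}{2},\quad\min(r_{1},\,s_{1},\,r_{1}+s_{1}-\tfrac{1}{2})\geq t.
\]
Under $t\leq\min(r,s)$ and $r+s-t>\tfrac{1}{2}$ this system is easily solvable, for instance by taking $r_{1}=\max(t,0)+\delta$ and $s_{1}$ chosen so that $r_{1}+s_{1}$ just exceeds $\max(\tfrac{1}{2},\,t+\tfrac{1}{2})$. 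Summing $\langle k\rangle^{2t}|(a*b)(k)|^{2}$ over $k$ and using $\|A^{2}*B^{2}\|_{1}=\|A\|_{2}^{2}\|B\|_{2}^{2}$ then yields the bound claimed in (a), with a constant independent of $n$.

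Parts (b) and (c) I would deduce from (a) by duality. Under the $l^{2}$-pairing, $(h^{s,m})^{*}=h^{-s,m}$, and a direct computation gives the identities
\[
\langle a*b,\,c\rangle_{l^{2}}=\langle a,\,c*b_{-}\rangle_{l^{2}}=\langle b,\,a_{-}*c\rangle_{l^{2}},\qquad b_{-}(k):=\overline{b(-k)},
\]
together with the isometry $\|b_{-}\|_{h^{s,m}}=\|b\|_{h^{s,-m}}$. These identities permute the three factors in the underlying trilinear form $\sum_{l+j=m}\alpha(l)\beta(j)\overline{\gamma(m)}$, and because the hypotheses $r,s\geq 0$, $t\leq\min(r,s)$, $r+s-t>\tfrac{1}{2}$ are preserved by each of the relevant permutations (in particular being symmetric in $r$ and $s$), the new arrangements correspond exactly to the configurations of (b) and (c). The constants and the $n$-uniformity transfer immediately from (a).

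The main obstacle is the boundary regime of (a) where $t<0$ and $r+s$ is close to $\tfrac{1}{2}$. In this narrow window the naive splitting $r_{1}=r$, $s_{1}=s$ produces either a divergent inner kernel sum or one whose bound carries a logarithmic loss, so one has to either split $(r_{1},s_{1})$ with extra care or replace the classical kernel estimate by a direct dyadic decomposition of $\{|q|\leq |k|/2\}$ versus its complement. This is the single place where the strict inequality $r+s-t>\tfrac{1}{2}$ is used essentially.
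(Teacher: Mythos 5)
The paper never proves this lemma---it is imported verbatim from \cite{R2}---so there is no internal proof to compare against; I am judging your argument on its own terms. Your part (a) is the standard weighted Cauchy--Schwarz/Schur argument and is essentially right in the main regime, and your duality derivation of (b) is correct: encoding each statement as boundedness of the symmetric trilinear form $\sum_{l+j+k=0}\alpha(l)\beta(j)\gamma(k)$, statement (a) carries the (exponent, shift) triple $\{(r,n),(s,-n),(-t,0)\}$ and statement (b) carries $\{(-t,0),(s,n),(r,-n)\}$, the same multiset with $n\mapsto-n$, so (b) follows with the same constant.

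The genuine gap is (c). Both adjoints of the bilinear map in (a) (in the first slot and in the second) yield the \emph{same} statement, namely $h^{-t}\times h^{s,n}\to h^{-r,n}$ up to interchanging $r$ and $s$; neither yields $h^{t}\times h^{-s,n}\to h^{-r,n}$. In trilinear terms (c) corresponds to the exponent triple $\{t,-s,r\}$, which is not a permutation of (a)'s triple $\{r,s,-t\}$ unless $s=t$ or $s=0$, so your claim that ``the new arrangements correspond exactly to the configurations of (b) and (c)'' fails for (c). In fact (c) as printed is false whenever $t<s$: with $n=0$ take the Kronecker deltas $a=\delta_{-N}$, $b=\delta_{N}$; then $a*b=\delta_{0}$, so $\|a*b\|_{h^{-r}}=1$ while $\|a\|_{h^{t}}\|b\|_{h^{-s}}=(1+N)^{t-s}\to 0$. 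Presumably (c) is a misprint of the second dual form; note that the paper only ever invokes it with $r=s=t=m$, where it does reduce, via $\|\cdot\|_{h^{-r,n}}\le\|\cdot\|_{h^{-s,n}}$ and duality, to the module property $h^{s}\times h^{s,n}\to h^{s,n}$ covered by (a). You should prove that corrected statement rather than assert (c) by symmetry.

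A secondary gap in (a): your scheme needs $r_{1}+s_{1}>\tfrac12$ with $0\le r_{1}\le r$, $0\le s_{1}\le s$, which is impossible when $r+s\le\tfrac12$ (then necessarily $t<r+s-\tfrac12<0$). You flag this region, but the crude fallback $\|a*b\|_{\infty}\le\|a\|_{2}\|b\|_{2}$ followed by summing $\langle k\rangle^{2t}$ requires $t<-\tfrac12$, which is not implied: $r=s=\tfrac14$, $t=-\tfrac14$ satisfies all hypotheses and escapes both of your arguments. Here one genuinely needs the decomposition according to which of the three frequencies $|l|,|j|,|k|$ is dominant (the ``dyadic'' alternative you mention), carried out rather than gestured at.
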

So, the map
\begin{align}\label{eq_40}
    & H^{-m}_{per}[-1,1]\times H^{m}_{per}[-1,1]\longmapsto
    H^{-m}_{per}[-1,1], \\
    & (V,f)\longmapsto V\cdot f
\end{align}
is continuous, when $V\cdot f$ is given by formula \eqref{eq_34}.

Now,we can define the operator $L_{m}(V)$, which is given by the
differential expression
\begin{equation*}\label{eq_42}
  l[\cdot]:=(-1)^{m}\frac{d^{2m}}{dx^{2m}}+V,\quad m\in \mathbb{N}
\end{equation*}
with singular complex-valued potentials $V$ in
$H_{per}^{-m}[-1,1]$. The operator $L_{m}(V)$ is well defined as
an unbounded linear operator in $H_{per}^{-m}[-1,1]$ with the
dense domain $$H_{per}^{m}[-1,1].$$ Really, the derivative
operator
\begin{equation*}\label{eq_44}
(-1)^{m}\frac{d^{2m}}{dx^{2m}}: H_{per}^{m}[-1,1]\rightarrow
H_{per}^{-m}[-1,1]
\end{equation*}
and the multiplication operator $f\rightarrow V f$ maps the space
$H_{per}^{m}[-1,1]$ into $H_{per}^{-m}[-1,1]$ by \eqref{eq_36}.
\begin{lemma}\label{l_12}
The multiplication operator $V$ is $L_{m}(0)$-bounded and its
relative-bound is 0, i.e. $V\ll L_{m}(0)$.
\end{lemma}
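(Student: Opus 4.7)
The plan is to show that for every $\varepsilon>0$ there exists a constant $C_\varepsilon>0$ such that
\[
\|Vf\|_{-m}\le\varepsilon\|L_m(0)f\|_{-m}+C_\varepsilon\|f\|_{-m}\quad\text{for all } f\in H^m_{per}[-1,1],
\]
which is precisely the assertion $V\ll L_m(0)$. The key ingredients will be the Convolution Lemma \ref{l_10} together with a Fourier splitting of $V$ into a smooth low-frequency part and an $H^{-m}$-small high-frequency tail.

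First I would record the elementary coercivity inequality
\[
\|f\|_m\le c\,(\|L_m(0)f\|_{-m}+\|f\|_{-m}),
\]
which follows from a direct Fourier-side computation: since $\widehat{L_m(0)f}(k)=(k\pi)^{2m}\hat f(k)$, one has $\langle k\rangle^{-2m}(k\pi)^{4m}\asymp\langle k\rangle^{2m}$ for $k\ne 0$, so $\|L_m(0)f\|_{-m}^2$ controls $\sum_{k\ne 0}\langle k\rangle^{2m}|\hat f(k)|^2$, while the missing zero-mode contribution $|\hat f(0)|^2$ is dominated by $\|f\|_{-m}^2$.

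Next, for each $N\in\mathbb{N}$ I would split $V=V_N+V^N$ with $V_N:=\sum_{|k|\le N}\hat V(k)e^{ik\pi x}$ and $V^N:=V-V_N$. Applying Lemma \ref{l_10}(b) with indices $t=s=r=m$ (the hypothesis $s+r-t=m>1/2$ holds because $m\ge 1$) to $V^N\in H^{-m}_{per}$ and $f\in H^m_{per}$ yields
\[
\|V^N f\|_{-m}\le C\|V^N\|_{-m}\|f\|_m.
\]
Since $V_N$ is a trigonometric polynomial, in particular $V_N\in H^m_{per}$, part (c) of the same lemma (again with $t=s=r=m$) delivers
\[
\|V_N f\|_{-m}\le C\|V_N\|_m\|f\|_{-m}.
\]

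Finally, $\|V^N\|_{-m}\to 0$ as $N\to\infty$ since it is the tail of the $\ell^2$-convergent series defining $\|V\|_{-m}^2$. Given $\varepsilon>0$, I pick $N=N(\varepsilon)$ so large that $Cc\,\|V^N\|_{-m}<\varepsilon$, combine the two displayed estimates with the coercivity inequality, and obtain
\[
\|Vf\|_{-m}\le Cc\,\|V^N\|_{-m}\,\|L_m(0)f\|_{-m}+(Cc\,\|V^N\|_{-m}+C\|V_N\|_m)\,\|f\|_{-m},
\]
i.e.\ the desired relative-bound-zero inequality with $C_\varepsilon:=\varepsilon+C\|V_N\|_m<\infty$. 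The only delicate point — the place where relative bound $0$ departs from plain $L_m(0)$-boundedness — is that the tail $\|V^N\|_{-m}$ must be driven to $0$; this is exactly what the Fourier splitting accomplishes, at the price of a low-frequency constant $\|V_N\|_m$ that grows with $N$ but remains finite for each fixed $\varepsilon$.
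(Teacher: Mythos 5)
Your proof is correct and follows essentially the same route as the paper: both arguments split $V$ into a smooth part in $H^{m}_{per}$ plus a part of small $H^{-m}_{per}$-norm (your Fourier truncation $V=V_N+V^N$ is just the concrete realization of the paper's density decomposition $V=V_0+V_\delta$), apply the Convolution Lemma twice to bound $\|V_N f\|_{-m}$ by $\|f\|_{-m}$ and $\|V^N f\|_{-m}$ by $\|f\|_{m}$, and close with the coercivity estimate $\|f\|_m\lesssim\|L_m(0)f\|_{-m}+\|f\|_{-m}$. No gaps; your version merely makes the decomposition and the coercivity constant explicit.
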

\begin{proof}
According to the Convolution Lemma there exists the constant
$C_{m}>0$ such that
\begin{equation*}\label{eq_46}
  \parallel V u\parallel_{-m}\leq
  \begin{cases}
    C_{m}\parallel V \parallel_{m}\parallel u\parallel_{-m}, & \quad V\in H_{per}^{m}[-1,1], u\in H_{per}^{-m}[-1,1], \\
    C_{m}\parallel V \parallel_{-m}\parallel u\parallel_{m}, & \quad V\in H_{per}^{-m}[-1,1], u\in
    H_{per}^{m}[-1,1].
  \end{cases}
\end{equation*}
Further, for any fixed $\delta>0$ there exists a decomposition
$$V=V_{0}+V_{\delta}$$ with
\begin{equation*}\label{eq_48}
  V_{0}\in H_{per}^{m}[-1,1], V_{\delta}\in H_{per}^{-m}[-1,1], \parallel V_{\delta}
  \parallel_{-m}<\frac{\delta}{C_{m}}.
\end{equation*}
Taking to account that
\begin{equation*}\label{eq_50}
  \parallel u\parallel_{m}\leq \parallel u\parallel_{-m}+\parallel
  L_{m}(0)u\parallel_{-m}, \quad u\in H_{per}^{m}[-1,1]
\end{equation*}
then we have the following estimates:
\begin{align*}\label{eq_52}
 \parallel V u\parallel_{-m} & \leq\parallel V_{0}u\parallel_{-m}+\parallel
  V_{\delta}u\parallel_{-m}\leq  C_{m}\parallel V_{0} \parallel_{m}\parallel u\parallel_{-m}+
  C_{m}\parallel V_{\delta} \parallel_{-m}\parallel u\parallel_{m} \\
  & \leq\delta\parallel L_{m}(0)u\parallel_{-m}+(C_{m}\parallel V_{0} \parallel_{m}+\delta)\parallel
  u\parallel_{-m}.
\end{align*}
Hence $V\ll L_{m}(0)$.
\end{proof}

Now we can prove Theorem \ref{th_14}. Statement (a) is a
consequence of Lemma \ref{l_12} and Theorem 1.11 (\cite{R3},Ch.
IV) since the operator $L_{m}(0)$ is self-adjoint. Similarly
statement (c) is a consequence of Lemma \ref{l_12} and Theorem
3.17 (\cite{R3}, Ch. IV) since a resolvent $R(\lambda, L_{m}(0))$
is a compact operator.

Remark that using the perturbation results (\cite{R1}, Ch. V, \S
11) and \cite{R3,R7} one can prove the following statement
\begin{theorem}\label{th_16}
For any $\varepsilon>0$ the spectrum of the operator $L_{m}(V)$
belongs to the cone
\begin{equation*}\label{eq_54}
  S_{\varepsilon}:=\left\{\lambda\in\mathbb{C}\left||\arg\lambda|<\varepsilon\right.\right\}
\end{equation*}
except a finite number of the eigenvalues. The asymptotic formula
\begin{equation*}\label{eq_56}
  \lambda_{n}(m,V)\sim\lambda_{n}(m,0), \quad n\rightarrow\infty
\end{equation*}
is valid.
\end{theorem}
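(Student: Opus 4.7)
The plan is to invoke the relative-bound-zero property from Lemma \ref{l_12} together with standard perturbation theorems for self-adjoint operators with compact resolvent, as indicated by the citations \cite{R1,R3,R7}. The unperturbed operator $L_m(0)$ acts diagonally in the Fourier basis by multiplication by the non-negative symbol $(k\pi)^{2m}$; with respect to the $H_{per}^{-m}$-inner product it is therefore non-negative self-adjoint, with spectrum consisting of the simple eigenvalue $0$ and the double eigenvalues $(n\pi)^{2m}$, $n\ge 1$. This spectral data, combined with $V\ll L_m(0)$, is exactly what the abstract machinery needs.

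For the sector inclusion, fix $\varepsilon>0$. For $\lambda\notin S_{\varepsilon}$ self-adjointness gives $\|R(\lambda,L_m(0))\|\le 1/\mathrm{dist}(\lambda,[0,\infty))$, and the identity $L_m(0)R(\lambda,L_m(0))=\lambda R(\lambda,L_m(0))-I$ yields $\|L_m(0)R(\lambda,L_m(0))\|\le 1+1/\sin\varepsilon$. By Lemma \ref{l_12}, fixing $\delta>0$ with $\delta(1+1/\sin\varepsilon)<1/2$ gives a decomposition $V=V_0+V_\delta$ and the estimate $\|VR(\lambda,L_m(0))\|\le 1/2 + C(V,\delta)/\mathrm{dist}(\lambda,[0,\infty))$. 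This is strictly less than $1$ for $|\lambda|$ sufficiently large outside $S_{\varepsilon}$, so a Neumann series argument places such $\lambda$ in $\rho(L_m(V))$. Compactness of the resolvent (Theorem \ref{th_14}) then restricts the exceptional eigenvalues outside $S_{\varepsilon}$ to a finite set.

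For the asymptotic formula I would apply the Riesz projection method. Take circles $\Gamma_n$ centred at $(n\pi)^{2m}$ with radii $r_n=o((n\pi)^{2m})$ but still growing, e.g.\ $r_n=\tfrac14((n+1)^{2m}-n^{2m})\pi^{2m}$, so that each $\Gamma_n$ separates $(n\pi)^{2m}$ from $\mathrm{spec}(L_m(0))\setminus\{(n\pi)^{2m}\}$ with gap tending to infinity. The same Neumann estimate holds uniformly on $\Gamma_n$ for large $n$, so $R(\cdot,L_m(V))$ exists on $\Gamma_n$ and the projections $P_n(V):=-\frac{1}{2\pi i}\oint_{\Gamma_n}R(\lambda,L_m(V))\,d\lambda$ converge in operator norm to the rank-$2$ projection $P_n(0)$ of $L_m(0)$. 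Norm convergence of projections forces equality of ranks for large $n$, so exactly two eigenvalues of $L_m(V)$ (counted with algebraic multiplicity) lie inside $\Gamma_n$; by the lexicographic enumeration these are $\lambda_{2n-1}(m,V)$ and $\lambda_{2n}(m,V)$. Since $r_n=o((n\pi)^{2m})=o(\lambda_n(m,0))$, the claim $\lambda_n(m,V)\sim\lambda_n(m,0)$ follows.

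The main technical obstacle is arranging the Neumann bound uniformly along the moving contours $\Gamma_n$: one must choose the splitting $V=V_0+V_\delta$ once and for all with $\delta$ independent of $n$, which Lemma \ref{l_12} explicitly permits. Once this is done, the growing distance from $\Gamma_n$ to the rest of the spectrum absorbs the $C(V,\delta)/\mathrm{dist}$-term; the remaining stability of the projection rank is a routine operator-theoretic consequence of norm convergence.
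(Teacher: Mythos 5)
The paper does not actually prove Theorem \ref{th_16}; it is stated as a remark, with the proof omitted in favour of citations to perturbation theory (\cite{R1}, Ch.~V, \S 11, and \cite{R3,R7}), since Theorems \ref{th_10} and \ref{th_12} are stronger. So your attempt can only be judged on its own merits. The first half --- the sector localization --- is sound: $L_m(0)$ is non-negative self-adjoint in $h^{-m}$, so $\|R(\lambda,L_m(0))\|\le 1/\operatorname{dist}(\lambda,[0,\infty))$ and $\|L_m(0)R(\lambda,L_m(0))\|\le 1+1/\sin\varepsilon$ outside $S_\varepsilon$, and combined with Lemma \ref{l_12} this gives $\|VR(\lambda,L_m(0))\|<1$ for $|\lambda|$ large outside the sector; discreteness of the spectrum (Theorem \ref{th_14}) then leaves only finitely many exceptional eigenvalues. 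That part is correct.

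The second half has a genuine gap. On a contour $\Gamma_n$ centred at $(n\pi)^{2m}$ whose radius $r_n$ is comparable to the spectral gap $((n+1)^{2m}-n^{2m})\pi^{2m}\sim 2m\,n^{2m-1}\pi^{2m}$ --- and the radius cannot be taken much larger, or $\Gamma_n$ ceases to isolate the $n$-th eigenvalue pair --- one has
\begin{equation*}
\|L_m(0)R(\lambda,L_m(0))\|=\sup_{k}\frac{k^{2m}\pi^{2m}}{|\lambda-k^{2m}\pi^{2m}|}\ \ge\ \frac{n^{2m}\pi^{2m}}{r_n}\ \sim\ \frac{2n}{m}\ \longrightarrow\ \infty .
\end{equation*}
Hence the term $\delta\|L_m(0)R(\lambda,L_m(0))\|$ in the relative-bound estimate is of order $\delta n$ on $\Gamma_n$, and no choice of $\delta>0$ ``once and for all'' makes $\|VR(\lambda,L_m(0))\|<1$ uniformly there; the growing distance to the spectrum only kills the $C(V,\delta)/\operatorname{dist}$ term, not this one. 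Your claim that ``the same Neumann estimate holds uniformly on $\Gamma_n$'' is therefore false, and the convergence of the Riesz projections does not follow. This is exactly the obstruction the paper flags in the proof of Proposition \ref{pr_12} (``unfortunately, $\|(\lambda-D_m)^{-1}\|_{\mathcal{L}(h^{-m},h^{m})}=O(n^{m})$, and so we cannot argue as above''), and which it circumvents with the shifted weighted spaces $h^{s,n}$, the $O(1)$ bound $\|(\lambda-D_m)^{-1}\|_{\mathcal{L}(h^{-m,n},h^{m,-n})}$, and the second-order resolvent expansion \eqref{eq_94} involving $T_\lambda=\sum_{l\ge0}(B_1L_\lambda^{-1})^{2l}$. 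To prove Theorem \ref{th_16} without that machinery you would have to follow the route the citations actually indicate: a Keldysh/Gohberg--Krein type theorem comparing eigenvalue counting functions of $L_m(0)$ and its relatively compact perturbation, from which $\lambda_n(m,V)\sim\lambda_n(m,0)$ follows via $N(r)\sim c\,r^{1/2m}$, rather than rank-two Riesz projections around individual unperturbed eigenvalues.
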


Obviously that assertions of Theorems \ref{th_10} and \ref{th_12}
are much stronger. Therefore the proof of Theorem \ref{th_16} is
omitted.
\section{Proofs of the Main Theorems}
To prove the theorems \ref{th_10} and \ref{th_12} it is useful to
deal with the eigenvalue problem for the operator $\hat{L}_{m}(v)$
in the sequence Hilbert space $h^{-m}(\mathbb{Z})$. This operator
has the same spectrum and is of the form
\begin{equation*}\label{eq_58}
    \hat{L}_{m}\equiv\hat{L}_{m}(v)=D_{m}+B(v),
\end{equation*}
where $D_{m}$ and $B(v)$ are infinite matrices,
\begin{align*}\label{eq_60}
 & D_{m}(k,j):=k^{2m}\pi^{2m}\delta_{kj}, \\
 & B(k,j):=v(k-j), \quad k,j\in \mathbb{Z}
\end{align*}
with $v(k):=\hat{V}(k)$ in the  sequence space $h^{-m}$. By the
Convolution Lemma $$B(v)a=v*a$$ is well defined for $a\in h^{m}$
and hence the operator $\hat{L}_{m}(v)=D_{m}+B(v)$ is well defined
as an unbounded linear operator in the Hilbert space $h^{-m}$ with
the domain $$D(\hat{L}_{m})=h^{m}.$$

The eigenvalue problem
\begin{equation*}\label{eq_62}
  \hat{L}_{m}(v)f=\lambda f, \quad f\in h^{m}
\end{equation*}
is studied in this section. For this purpose we will compare the
spectrum $spec(D_{m}+B(v))$ of the operator
$\hat{L}_{m}(v)=D_{m}+B(v)$ to the spectrum of the operator
$D_{m}$. It is clearly that
\begin{equation*}
  spec(D_{m})=\{k^{2m}\pi^{2m}|k\in\mathbb{Z}_{+}\},
\end{equation*}
where the eigenvalue $0$ is simple and other eigenvalues are
double.

For given $M\geq 1$, $n\geq 1$, and $0<r_{n}<n^{m}\pi^{2m}$ the
following regions $Ext_{M}$ and $Vert^{m}_{n}(r)$,
$m\in\mathbb{N}$ of complex plane will be used:
\begin{align*}\label{eq_64}
    & Ext_{M}:=\{\lambda\in\mathbb{C}\;|\;Re\,\lambda\leq|Im\,\lambda|-M\},\\
    & Vert^{m}_{n}(r_{n}):=\{\lambda=n^{2m}\pi^{2m}+z\in\mathbb{C}\;|\;|Re\,z| \leq
  n^{m}\pi^{2m},\;|z|\geq r_{n}\}.
\end{align*}
Let formulate the result which we will use bellow.
\begin{proposition}[\cite{R5}]\label{pr_10}
Let $R>0$. There exist $\mathrm{M}\geq1$ and $n_{0}\in\mathbb{N}$
so that for any $v\in h^{0}$ with $$\|v\|_{h^{0}}\leq\mathrm{R}$$
the spectrum \emph{spec}$(D_{m}+B(v))$ of the operator
$\hat{L}_{m}(v)=D_{m}+B(v)$ with $B(v)=v\ast\cdot$ consists of a
sequence $(\lambda_{k}(v))_{k\geq 0}$ such that:
\begin{itemize}
  \item [(a)] There are precisely $2n_0-1$ eigenvalues
  inside the bounded cone
\begin{equation*}\label{eq_68}
  T_{M,n_0}=\left\{\lambda\in\mathbb{C}\,\left|\,\right.|Im\,\lambda|-M\leq Re\,\lambda \leq
    (n_{0}^{2m}-n_{0}^{m})\pi^{2m}\right\}.
\end{equation*}
  \item [(b)] For $n\geq n_0$ the pairs of eigenvalues
  $\lambda_{2n-1}(m,v)$, $\lambda_{2n}(m,v)$ are inside a
  disc around $n^{2m}\pi^{2m}$,
\begin{align*}\label{eq_70}
    & |\lambda_{2n-1}(v)-n^{2m}\pi^{2m}|<(3^{m}\sqrt{2}+1)\mathrm{R},\\
    & |\lambda_{2n}(v)-n^{2m}\pi^{2m}|<(3^{m}\sqrt{2}+1)\mathrm{R}.
\end{align*}
\end{itemize}
\end{proposition}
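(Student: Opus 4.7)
The plan is to compare $\hat L_m(v) = D_m + B(v)$ with the diagonal operator $D_m$ by a Neumann-series expansion of the resolvent on carefully chosen contours, combined with a Riesz-projection counting argument. The spectrum of $D_m$ consists of $k^{2m}\pi^{2m}$, $k \in \mathbb{Z}_+$, with $0$ simple and all others double (eigenspace $\mathrm{span}(e_n, e_{-n})$ for $n \geq 1$); the gap between consecutive eigenvalues grows like $n^{2m-1}$, which dwarfs the $O(R)$ size of $B(v)$. The goal is to show that for suitable $M \geq 1$ and $n_0 \in \mathbb{N}$ the series for $(D_m + B(v) - \lambda)^{-1}$ converges on $Ext_M$ and on each $Vert^m_n(r_n)$ with $r_n := (3^m\sqrt 2 + 1)R$, $n \geq n_0$, putting the perturbed spectrum into the complement: the cone $T_{M, n_0}$ together with the discs $\{|\lambda - n^{2m}\pi^{2m}| < r_n\}$.

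The central step is the operator-norm bound, uniform in $\|v\|_{h^0} \leq R$,
\begin{equation*}
\bigl\|B(v)(D_m - \lambda)^{-1}\bigr\| < 1 \quad \text{for } \lambda \in Ext_M \cup \bigcup_{n \geq n_0} Vert^m_n(r_n).
\end{equation*}
The relevant mapping scheme for $B(v)$ is $h^m \to h^{-m}$; the Convolution Lemma \ref{l_10} applied with $(r,s,t) = (0,m,0)$ gives $\|B(v)u\|_{h^0} \leq C\|v\|_{h^0}\|u\|_{h^m}$, quantifying $B(v) = v \ast \cdot\,$ as a bounded multiplier from $h^m$ into $h^0 \hookrightarrow h^{-m}$. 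To get the sharp constant I would split the frequency into $|k| \leq 3n$ and $|k| > 3n$: on $Vert^m_n(r_n)$ the denominator $|k^{2m}\pi^{2m} - \lambda|$ is $\geq r_n$ for $k = \pm n$ and grows like $|k|^{2m}$ for $|k| > 3n$, so a Cauchy--Schwarz summation against $\|v\|_{h^0} \leq R$ contributes $3^m\sqrt 2 \cdot R / r_n$ from the low-frequency block and an $o(1)$ term from the high-frequency block once $n \geq n_0$. The inequality then holds exactly when $r_n > (3^m\sqrt 2 + 1)R$, with the additive $+1$ absorbing the high-frequency tail together with the diagonal entry $\hat v(0)$ acting on $\mathrm{span}(e_n, e_{-n})$.

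With the Neumann estimate in hand the counting is standard. For (b), integrating $(D_m + B(v) - \lambda)^{-1}$ around the circle $\{|\lambda - n^{2m}\pi^{2m}| = r_n\}$ produces a Riesz projection
\begin{equation*}
\mathcal{P}_n(v) := -\frac{1}{2\pi i}\oint_{|\lambda - n^{2m}\pi^{2m}| = r_n} (D_m + B(v) - \lambda)^{-1}\,d\lambda
\end{equation*}
that is norm-continuous in $v$ and coincides with the rank-$2$ spectral projection of $D_m$ when $v = 0$; since the rank of a norm-continuous family of finite-rank projections is locally constant, $\mathrm{rank}\,\mathcal{P}_n(v) = 2$, placing exactly two eigenvalues (with algebraic multiplicity) in each disc. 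For (a), the same argument applied on $\partial T_{M, n_0}$ — which is contained in $Ext_M \cup \bigcup_{n \geq n_0} Vert^m_n(r_n)$ once $M$ is large — combined with the observation that $T_{M, n_0}$ encloses exactly the $D_m$-eigenvalues $0, \pi^{2m}, \ldots, (n_0-1)^{2m}\pi^{2m}$ of total multiplicity $2n_0 - 1$, gives the count.

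The principal obstacle is extracting the sharp constant $(3^m\sqrt 2 + 1)R$ rather than a softer $C_m R$. A bound of size $O(R)$ is routine once the $D_m$-gap is exploited, but pinning down the explicit constant demands careful bookkeeping in the split $|k| \leq 3n$ vs.\ $|k| > 3n$ and a Cauchy--Schwarz step that folds all multiplicative errors into $3^m\sqrt 2$. After that, the conclusion is pure holomorphic functional calculus together with continuity of Riesz projections under perturbation of $v$ in $h^0$.
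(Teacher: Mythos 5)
First, a point of record: the paper does not prove Proposition \ref{pr_10} at all --- it is imported verbatim from \cite{R5} --- so there is no in-paper proof to compare against. Your overall architecture (Neumann series for the resolvent on $Ext_M$ and on the truncated strips $Vert^m_n(r_n)$, followed by Riesz-projection counting along the homotopy $v\mapsto tv$) is exactly the machinery this paper uses for Propositions \ref{pr_12} and \ref{pr_14}, and the counting part of your argument, i.e.\ deducing (a) and (b) once the resolvent bound is in hand, is sound.

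The gap is in the central estimate. You fix the mapping scheme $B(v)\colon h^m\to h^0\hookrightarrow h^{-m}$, so that $B(v)(\lambda-D_m)^{-1}$ becomes an endomorphism of $h^{-m}$ only through the smoothing factor $(\lambda-D_m)^{-1}\colon h^{-m}\to h^{m}$. By Lemma \ref{l_16} that factor has norm $\sup_k\langle k\rangle^{2m}/|\lambda-k^{2m}\pi^{2m}|$, and on a circle of \emph{constant} radius $r_n=(3^m\sqrt2+1)R$ about $n^{2m}\pi^{2m}$ the resonant frequencies $k=\pm n$ make this of order $n^{2m}/r_n$; the resulting product bound is $O(Rn^{2m}/r_n)$, which diverges. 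This is precisely the obstruction the authors flag in the proof of Proposition \ref{pr_12} (``unfortunately $\|(\lambda-D_m)^{-1}\|_{\mathcal L(h^{-m},h^{m})}=O(n^{m})$, and so we can not argue as above''), and your frequency split at $|k|=3n$ does not repair it, since the divergence comes from $k=\pm n$, which lie in your low block. For $v\in h^0$ the correct scheme is a different one: Convolution Lemma (c) with $(t,s,r)=(0,m,m)$ gives $B(v)\in\mathcal L(h^{-m,n})$ uniformly in $n$ with norm $\le C_m\|v\|_{h^0}$, and paired with $\|(\lambda-D_m)^{-1}\|_{\mathcal L(h^{-m})}=1/\operatorname{dist}(\lambda,\operatorname{spec}D_m)=1/r_n$ this yields $\|B(v)(\lambda-D_m)^{-1}\|_{\mathcal L(h^{-m})}\le C_mR/r_n<1$ for $r_n>C_mR$, with no loss of powers of $n$. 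Even granting this, the specific constant $3^m\sqrt2+1$ is asserted rather than derived: ``holds exactly when $r_n>(3^m\sqrt2+1)R$'' and ``the $+1$ absorbs the tail and $\hat v(0)$'' are numerology until the Cauchy--Schwarz bookkeeping is written out, and that bookkeeping is the actual content of \cite{R5}. A minor further point: to conclude that the cone and the discs exhaust the spectrum you should also note that the resolvent exists in the gaps between consecutive strips $Vert^m_n$ and $Vert^m_{n+1}$ (easy, since there $\operatorname{dist}(\lambda,\operatorname{spec}D_m)\gtrsim n^{m}$).
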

In a straightforward way, one can prove the following two
auxiliary lemmas.
\begin{lemma}\label{l_16}
For any $s,t\in\mathbb{R}$ with $s-t\leq2$ and any
$\lambda\in\mathbb{C}\setminus spec(D_{m})$, $m\in\mathbb{N}$ we
have $(\lambda-D_{m})^{-1}\in\mathcal{L}(h^{mt},h^{ms})$ with norm
\begin{equation*}\label{eq_72}
  \parallel(\lambda-D_{m})^{-1}\parallel_{\mathcal{L}(h^{mt},h^{ms})}=
  \sup_{k\in\mathbb{Z}}\frac{<k>^{m(s-t)}}{\mid\lambda-k^{2m}\pi^{2m}\mid}<\infty.
\end{equation*}
\end{lemma}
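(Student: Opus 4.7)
\textbf{Proof proposal for Lemma \ref{l_16}.} The operator $D_m$ acts diagonally in the standard basis $\{e_k\}_{k\in\mathbb{Z}}$ of $h^{ms}$ (for any $s$), with $D_m e_k = k^{2m}\pi^{2m} e_k$. Since $\lambda\notin\mathrm{spec}(D_m)$, the resolvent $(\lambda-D_m)^{-1}$ acts as a diagonal multiplier
\begin{equation*}
  \bigl((\lambda-D_m)^{-1}a\bigr)(k)=\frac{a(k)}{\lambda-k^{2m}\pi^{2m}},\qquad k\in\mathbb{Z}.
\end{equation*}
The plan is to (i) establish the operator-norm identity by a direct diagonal computation, and (ii) check that the supremum is finite precisely under the hypothesis $s-t\leq 2$.

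For step (i), given $a\in h^{mt}$ set $b=(\lambda-D_m)^{-1}a$. Using the definition of the weighted $\ell^2$-norm,
\begin{equation*}
  \|b\|_{h^{ms}}^{2}=\sum_{k\in\mathbb{Z}}\langle k\rangle^{2ms}\frac{|a(k)|^{2}}{|\lambda-k^{2m}\pi^{2m}|^{2}}
  =\sum_{k\in\mathbb{Z}}\langle k\rangle^{2mt}\,\frac{\langle k\rangle^{2m(s-t)}}{|\lambda-k^{2m}\pi^{2m}|^{2}}|a(k)|^{2},
\end{equation*}
which is at most $C^{2}\|a\|_{h^{mt}}^{2}$ with $C:=\sup_{k}\langle k\rangle^{m(s-t)}/|\lambda-k^{2m}\pi^{2m}|$. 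This gives the upper bound $\|(\lambda-D_m)^{-1}\|_{\mathcal L(h^{mt},h^{ms})}\leq C$. For the reverse inequality, I would test the operator on the basis vectors: $\|e_{k}\|_{h^{mt}}=\langle k\rangle^{mt}$ and $\|(\lambda-D_m)^{-1}e_{k}\|_{h^{ms}}=\langle k\rangle^{ms}/|\lambda-k^{2m}\pi^{2m}|$, so the ratio equals $\langle k\rangle^{m(s-t)}/|\lambda-k^{2m}\pi^{2m}|$; taking the supremum over $k$ yields $\|(\lambda-D_m)^{-1}\|_{\mathcal L(h^{mt},h^{ms})}\geq C$, and the stated identity follows.

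For step (ii), only finitely many $k$ satisfy $|k^{2m}\pi^{2m}|\leq 2|\lambda|$, and each of those terms is finite because $\lambda\notin\mathrm{spec}(D_m)$. For the remaining large $|k|$, one has $|\lambda-k^{2m}\pi^{2m}|\geq \tfrac{1}{2}k^{2m}\pi^{2m}$, so
\begin{equation*}
  \frac{\langle k\rangle^{m(s-t)}}{|\lambda-k^{2m}\pi^{2m}|}\leq \frac{2\langle k\rangle^{m(s-t)}}{k^{2m}\pi^{2m}}
  =O\bigl(|k|^{m(s-t-2)}\bigr),
\end{equation*}
which is bounded precisely when $s-t\leq 2$, exactly the hypothesis. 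Combining the two contributions gives $C<\infty$ and hence $(\lambda-D_m)^{-1}\in\mathcal L(h^{mt},h^{ms})$.

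The only genuinely non-routine point is recognizing why the condition $s-t\leq 2$ is sharp: it reflects the fact that $D_m$ is a differential operator of order $2m$, whose inverse gains exactly $2m$ derivatives, so the resolvent acts between $h^{ms}$ and $h^{mt}$ only as long as the prescribed gain $m(s-t)$ does not exceed $2m$. Everything else is a bookkeeping exercise on a diagonal operator.
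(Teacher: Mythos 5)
Your proof is correct and is exactly the standard diagonal-operator computation the authors have in mind: the paper omits the proof entirely, stating only that Lemma \ref{l_16} can be proved ``in a straightforward way.'' Both the norm identity (upper bound plus testing on basis vectors) and the finiteness check under $s-t\leq 2$ are carried out correctly, so your argument supplies precisely the routine verification the paper leaves to the reader.
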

\begin{lemma}\label{l_18}
Uniformly for $n\in\mathbb{Z}\setminus\{0\}$ and $\lambda\in
Vert^{m}_{n}(r_{n})$
\begin{align*}
 (a)& \quad\parallel(\lambda-D_{m})^{-1}\parallel_{\mathcal{L}(h^{-m})}=\frac{1}{r_{n}}O(1),
  & \quad (a') & \quad\parallel(\lambda-D_{m})^{-1}\parallel_{\mathcal{L}(h^{-m})}=O(n^{-m}), \\
  (b) & \quad\parallel(\lambda-D_{m})^{-1}\parallel_{\mathcal{L}(h^{-m,n})}=\frac{1}{r_{n}}O(1),
  & \quad
  (b')&\quad\parallel(\lambda-D_{m})^{-1}\parallel_{\mathcal{L}(h^{-m,n})}=O(n^{-m}),\\
  (c)& \quad\parallel(\lambda-D_{m})^{-1}\parallel_{\mathcal{L}(h^{-m,n},h^{-m})}=\frac{1}{r_{n}}O(1),
  & \quad (c') &
  \quad\parallel(\lambda-D_{m})^{-1}\parallel_{\mathcal{L}(h^{-m,n},h^{-m})}=O(n^{-m}),
  \\
  (d)& \quad\parallel(\lambda-D_{m})^{-1}\parallel_{\mathcal{L}(h^{-m},h^{m,n})}=\frac{n^{2m}}{r_{n}}O(1),
  & \quad (d') &
  \quad\parallel(\lambda-D_{m})^{-1}\parallel_{\mathcal{L}(h^{-m},h^{m,n})}=O(n^{m}),
  \\
  (e)&
  \quad\parallel(\lambda-D_{m})^{-1}\parallel_{\mathcal{L}(h^{-m,n},h^{m,-n})}=\frac{n^{2m}}{r_{n}}O(1);
  & \quad (e') &
  \quad\parallel(\lambda-D_{m})^{-1}\parallel_{\mathcal{L}(h^{-m,n},h^{m,-n})}=O(1).
\end{align*}
\end{lemma}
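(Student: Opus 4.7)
The plan is to exploit the diagonality of $D_m$ in the Fourier representation: $(\lambda - D_m)^{-1}$ acts coordinatewise as multiplication by $(\lambda - k^{2m}\pi^{2m})^{-1}$, so its operator norm between any pair of shifted weighted spaces reduces to a single pointwise supremum. By the same change of variable $b(k) := \langle k+q\rangle^{-t} a(k)$ that proves Lemma~\ref{l_16}, one obtains
$$
\|(\lambda - D_{m})^{-1}\|_{\mathcal{L}(h^{t,q},\, h^{s,p})} \;=\; \sup_{k\in\mathbb{Z}} \frac{\langle k+p\rangle^{s}}{\langle k+q\rangle^{t}\,|\lambda - k^{2m}\pi^{2m}|},
$$
and each of the ten bounds in (a)--(e') is obtained by substituting the appropriate $(s,p,t,q)$ and estimating one $\sup_k$.

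To handle the supremum uniformly for $\lambda = n^{2m}\pi^{2m} + z \in Vert^{m}_{n}(r_n)$, I would split the indices into the resonant set $\{\pm n\}$ and its complement. At a resonant index $k = \pm n$ one has $|\lambda - k^{2m}\pi^{2m}| = |z| \geq r_n$; this is the source of every $1/r_n$ (respectively $n^{2m}/r_n$) factor on the right-hand side of (a)--(e). The accompanying power of $n$ comes from evaluating the weight quotient $\langle k+p\rangle^{s}/\langle k+q\rangle^{t}$ at $k=\pm n$, where the two resonant contributions involve $\langle 0\rangle$ and $\langle \pm 2n\rangle$ asymmetrically and one takes the larger of the two. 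For the primed bounds, I combine $|z| \geq r_n$ with the standing constraint $r_n < n^{m}\pi^{2m}$ (or, as in the intended application of the lemma, the complementary lower bound $r_n \gtrsim n^{m}$) to replace the explicit $1/r_n$ by the cleaner $O(n^{-m})$, $O(n^{m})$, or $O(1)$.

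For the non-resonant indices $k \neq \pm n$, the constraint $|Re\, z| \leq n^{m}\pi^{2m}$ yields
$$
|\lambda - k^{2m}\pi^{2m}| \;\geq\; \bigl(|k^{2m}-n^{2m}| - n^{m}\bigr)\pi^{2m},
$$
which is at least a constant times $n^{2m-1}$ for $k = \pm(n\pm 1)$ and grows polynomially as $|k|$ leaves the neighbourhood of $n$. One then checks in each of the ten cases that the weighted quotient evaluated at the non-resonant $k$ is dominated by the resonant contribution and is absorbed into the stated $O(1)$. The main obstacle is the systematic bookkeeping of the resonant weight quotients for the ten $(s,p,t,q)$-configurations, especially in (d) and (e) where the positive regularity $m$ on the output side produces products of two shifted weights, contributing the extra factor of $n^{m}$ needed to reach the $n^{2m}$ in the bound; apart from this, no functional-analytic subtlety enters beyond the diagonal reduction in the first display.
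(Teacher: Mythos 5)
The paper offers no proof of this lemma (it is dismissed as ``straightforward''), and your diagonal reduction of the operator norm to $\sup_{k}\langle k+p\rangle^{s}\langle k+q\rangle^{-t}/|\lambda-k^{2m}\pi^{2m}|$ followed by the resonant/non-resonant split at $k=\pm n$ is exactly the intended argument; your observation that the primed estimates hold only under the additional hypothesis $r_{n}\gtrsim n^{m}$ (as in the applications, where $r_{n}=n^{m}$) is also correct and worth making explicit, since the lemma as literally stated allows any $0<r_{n}<n^{m}\pi^{2m}$. The one point requiring care is $(e')$: there the resonant weight product is $\langle 0\rangle^{m}\langle 2n\rangle^{m}=O(n^{m})$, not $O(n^{2m})$, and this sharper value is precisely what yields $O(1)$ rather than the $O(n^{m})$ one would obtain by dividing the (non-sharp) bound $(e)$ by $r_{n}=n^{m}$ --- your bookkeeping of the asymmetric weights $\langle 0\rangle$ and $\langle\pm 2n\rangle$ does capture this.
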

\begin{proposition}\label{pr_12}
Let $v\in h^{-m}$. There exist $\varepsilon>0$, $M\geq1$ and
$n_{0}\in \mathbb{N}$ (both depending on $\varepsilon>0$) so that
for any $w\in h^{-m}$ with $$\parallel
w-v\parallel_{h^{-m}}\leq\varepsilon$$ the spectrum
\emph{spec}$(D_{m}+B(w))$ of the operator
$$\hat{L}_{m}(w)=D_{m}+B(w)$$ with $B(w)=w\ast\cdot$ consists of a
sequence $(\lambda_{k}(m,w))_{k\geq 0}$ such that:
\begin{itemize}
  \item [(a)] There are precisely $2n_0-1$ eigenvalues
  inside the bounded cone
\begin{equation*}\label{eq_74}
  T_{M,n_0}=\left\{\lambda\in\mathbb{C}\,\left|\,\right.|Im\,\lambda|-M\leq Re\,\lambda \leq
    (n_{0}^{2m}-n_{0}^{m})\pi^{2m}\right\}.
\end{equation*}
  \item [(b)] For $n\geq n_0$ the pairs of eigenvalues
  $\lambda_{2n-1}(m,w)$, $\lambda_{2n}(m,v)$ are inside a
  disc around $n^{2m}\pi^{2m}$,
\begin{equation*}\label{eq_76}
  |\lambda_{2n-i}(m,w)-n^{2m}\pi^{2m}|<n^{m},
  \quad i=0,1.
\end{equation*}
\end{itemize}
\end{proposition}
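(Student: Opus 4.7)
The plan is to work with the sequence-space form $\hat L_m(w) = D_m + B(w)$ on $h^{-m}$ and to establish invertibility of $\lambda - \hat L_m(w)$ on the boundaries of the regions excluded in (a) and (b) by a Neumann series in the $h^{-m}$ operator norm. The crucial ingredient is the bound $\|B(w)\|_{\mathcal{L}(h^{-m})} \leq C_m \|w\|_{h^{-m}}$, which is Lemma \ref{l_10}(c) applied with $t = -m$, $s = r = m$, $n = 0$.

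First I would set $R := \|v\|_{h^{-m}} + \varepsilon$, so that every admissible $w$ lies in the ball $\{\|w\|_{h^{-m}} \leq R\}$. A direct computation from the diagonal form of $D_m$ yields $\|(\lambda - D_m)^{-1}\|_{\mathcal{L}(h^{-m})} \leq 2/M$ on $Ext_M$, so I would pick $M > 4 C_m R$. By Lemma \ref{l_18}(a'), $\|(\lambda - D_m)^{-1}\|_{\mathcal{L}(h^{-m})} = O(n^{-m})$ on $Vert_n^m(n^m)$ uniformly, so I would pick $n_0$ large enough that this norm is at most $1/(2 C_m R)$ for every $n \geq n_0$. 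Because $\pi^{2m} \geq 1$, the right edge of $T_{M, n_0}$ at $\operatorname{Re}\lambda = (n_0^{2m} - n_0^m)\pi^{2m}$ lies inside $Vert_{n_0}^m(n_0^m)$, so the entire $\partial T_{M, n_0}$ is contained in $Ext_M \cup Vert_{n_0}^m(n_0^m)$. The factorization $\lambda - \hat L_m(w) = (\lambda - D_m)\bigl(I - (\lambda - D_m)^{-1} B(w)\bigr)$ then yields invertibility of $\lambda - \hat L_m(w)$ on $\partial T_{M, n_0}$ and on every $\partial B(n^{2m}\pi^{2m}, n^m)$ for $n \geq n_0$, uniformly in $w$ from the $R$-ball.

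For the counting I would introduce the Riesz projections $P_\Gamma(w) := \frac{1}{2\pi i} \int_\Gamma (\lambda - \hat L_m(w))^{-1}\, d\lambda$. The second resolvent identity combined with $\|B(w - w')\|_{\mathcal{L}(h^{-m})} \leq C_m \|w - w'\|_{h^{-m}}$ shows that $w \mapsto P_\Gamma(w)$ is continuous in $\mathcal{L}(h^{-m})$. Since $\operatorname{rank} P_\Gamma(w)$ is integer-valued and locally constant, it is constant along the affine path $s \mapsto s w$, $s \in [0, 1]$, which stays in the ball. Evaluation at $s = 0$, where $\hat L_m(0) = D_m$ has simple eigenvalue $0$ and double eigenvalues $k^{2m}\pi^{2m}$ for $k \geq 1$, delivers the counts $2n_0 - 1$ and $2$ appearing in (a) and (b).

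The main technical obstacle is the quantitative verification of the geometry: namely, that Lemma \ref{l_18}(a') supplies an implicit constant depending only on $m$ (so that $n_0$ can be chosen in terms of $R$ alone), and that both the slanted and the vertical parts of $\partial T_{M, n_0}$ genuinely sit inside $Ext_M$ and $Vert_{n_0}^m(n_0^m)$ with enough slack to keep the Neumann estimates strict on the contour itself. Once these geometric-analytic facts are established, the Neumann series and Riesz projection machinery proceeds routinely.
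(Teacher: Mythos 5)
Your argument rests on the bound $\|B(w)\|_{\mathcal{L}(h^{-m})}\le C_m\|w\|_{h^{-m}}$ for $w\in h^{-m}$, and this bound is false. It encodes the claim that convolution maps $h^{-m}\times h^{-m}$ into $h^{-m}$; but for $m\ge 1$ the constant sequence $a(k)\equiv 1$ lies in $h^{-m}$ while $(a*a)(k)=\sum_{j\in\mathbb{Z}}1$ diverges, so $B(a)$ is not even defined on all of $h^{-m}$, let alone bounded there. The Convolution Lemma cannot be invoked with $t=-m$, $s=r=m$ in part (c): the legitimate readings give $h^{m}\times h^{-m,n}\to h^{-m,n}$ (a \emph{smooth} multiplier acting on a rough sequence) or $h^{-m}\times h^{m,n}\to h^{-m,n}$ (a rough multiplier acting on a \emph{smooth} sequence, i.e. $B(w)\colon h^{m}\to h^{-m}$). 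In other words, $B(w)$ loses $2m$ derivatives, and the whole difficulty of the proposition is that the perturbation is only relatively bounded. With the false estimate removed, your Neumann series $\sum_k\bigl((\lambda-D_m)^{-1}B(w)\bigr)^{k}$ does not converge, and the homotopy $s\mapsto sw$ has no uniform resolvent bound along the path, so the Riesz-projection counting never gets off the ground.

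The paper's proof shows what is actually required. One splits $v=v_0+v_1$ with $v_0\in h^{m}$ and $\|v_1\|_{h^{-m}}\le\varepsilon$, so that $B_0=v_0*\cdot$ \emph{is} bounded on $h^{-m}$ (this is where the Convolution Lemma legitimately applies), while the rough part $B_1=(v_1+\tilde w)*\cdot$ is small but bounded only from $h^{m}$ to $h^{-m}$. On $Ext_M$ one absorbs $B_1$ because $\|(\lambda-D_m)^{-1}\|_{\mathcal{L}(h^{-m},h^{m})}=O(1)$ there; on the strips $Vert^{m}_{n}(n^{m})$ this norm is only $O(n^{m})$, so one must pass to the shifted spaces $h^{\pm m,\pm n}$, exploit the $O(1)$ bound of Lemma \ref{l_18}~$(e')$ in $\mathcal{L}(h^{-m,n},h^{m,-n})$, and write the resolvent via the three-term expansion \eqref{eq_94}, in which the factors of $B_1$ are paired so that each pair is controlled. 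Finally the homotopy runs from $D_m+B_0+B_1$ to $D_m+B_0$ (not to $D_m$), and the eigenvalue counts for $D_m+B_0$ are imported from Proposition \ref{pr_10} applied to $v_0\in h^{m}\subseteq h^{0}$. None of this is the ``routine geometry'' you defer to the end; it is the substance of the proof, and as written your proposal has no mechanism to handle the rough part of the potential.
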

\begin{proof}
Let $v\in h^{-m}$. Since the set $h^{m}$ is dense in the space
$h^{-m}$, we can represent $v$ in the form $$v=v_{0}+v_{1},$$ with
$v_{0}\in h^{m}$ and $\parallel
v_{1}\parallel_{h^{-m}}\leq\varepsilon$, where $\varepsilon>0$
will be find bellow. We will show that for some $M\geq 1$ and
$n_{0}\in\mathbb{N}$, which both depending on $\parallel
v_{0}\parallel_{h^{m}}$, so that for any $w=v+\tilde{w}\in h^{-m}$
with $\parallel \tilde{w}\parallel_{h^{-m}}\leq\varepsilon$,
\begin{equation}\label{eq_80}
  Ext_{M}\cup\bigcup_{n\geq
  n_{0}}Vert^{m}_{n}(n^{m})\subseteq\emph{Resol}
  (\hat{L}_{m}(w)),
\end{equation}
where $\emph{Resol}(\hat{L}_{m}(w))$ denotes the resolvent set of
the operator $\hat{L}_{m}(w)=D_{m}+B_{0}+B_{1}$, and
$B_{0}=v_{0}\ast\cdot$, and $B_{1}=(v_{1}+\tilde{w})\ast\cdot$.

At first let consider $\lambda\in Ext_{M}$ for $M\geq 1$. Using
the Convolution Lemma and the Lemma \ref{l_16} one gets
\begin{equation*}\label{eq_82}
  \parallel
  B_{0}(\lambda-D_{m})^{-1}\parallel_{\mathcal{L}(h^{-m})}
  \leq C_{m}\parallel v_{0}\parallel_{h^{m}}
  \parallel(\lambda-D_{m})^{-1}\parallel_{\mathcal{L}(h^{-m})} =\parallel
  v_{0}\parallel_{h^{m}}\cdot O(M^{-1}).
\end{equation*}
Hence, for $M\geq 1$ large enough and $\lambda\in Ext_{M}$,
\begin{equation*}\label{eq_84}
  L_{\lambda}:=\lambda-D_{m}-B_{0}
  =(Id-B_{0}(\lambda-D_{m})^{-1}) (\lambda-D_{m})
\end{equation*}
is invertible in $\mathcal{L}(h^{-m})$ with inverse
\begin{equation}\label{eq_86}
  L_{\lambda}^{-1}=(\lambda-D_{m})^{-1}
  (Id-B_{0}(\lambda-D_{m})^{-1})^{-1}.
\end{equation}
So, using the Convolution Lemma and the estimate
$$\parallel(\lambda-D_{m})^{-1}\parallel_{\mathcal{L}
(h^{-m},h^{m})}=O(1),$$ we obtain
\begin{equation*}\label{eq_88}
  \parallel B_{0}L_{\lambda}^{-1} \parallel_{\mathcal{L}(h^{-m})}
  =O(\parallel(v_{1}+\tilde{w})\parallel_{h^{-m}}=O(\varepsilon).
\end{equation*}
Therefore, if $\varepsilon> 0$ is small enough, the resolvent of
the operator $$\hat{L}_{m}(w)=D_{m}+B_{0}+B_{1}$$ exists in the
space $\mathcal{L}(h^{-m})$ for $\lambda\in Ext_{M}$ and is given
by the formula
\begin{equation}\label{eq_90}
  (\lambda-D_{m}-B_{0}-B_{1})^{-1}=(L_{\lambda}^{-1}-B_{1})^{-1}
  =L_{\lambda}^{-1}\Sigma_{k\geq 0}(B_{1}L_{\lambda}^{-1})^{k}.
\end{equation}
Consequently, for $M$ large enough, $Ext_{M}\subseteq
\emph{Resol}(\hat{L}_{m}(w))$.

To treat $\lambda\in Vert^{m}_{n}(n^{m})$, first note that,
unfortunately, $$\parallel(\lambda-D_{m})^{-1}
\parallel_{\mathcal{L}(h^{-m},h^{m})}=O(n^{m}),$$ and so
we can not argue as above. However, we have (see the Lemma
\ref{l_18} $(e')$)
\begin{equation}\label{eq_92}
\parallel(\lambda-D_{m})^{-1}
\parallel_{\mathcal{L}(h^{-m,n},h^{m,-n})}=O(1).
\end{equation}
Now, for $\lambda\in Vert^{m}_{n}(n^{m})$ with $n$ large enough,
we find that the following decomposition of the resolvent of
$\hat{L}_{m}(w)=D_{m}+B_{0}+B_{1}$ converges in the space
$\mathcal{L}(h^{-m})$,
\begin{equation}\label{eq_94}
  (\lambda-D_{m}-B_{0}-B_{1})^{-1}=L_{\lambda}^{-1}
  +L_{\lambda}^{-1}T_{\lambda} (B_{1}L_{\lambda}^{-1}) +L_{\lambda}^{-1}T_{\lambda}
  (B_{1}L_{\lambda}^{-1})^{2},
\end{equation}
where $L_{\lambda}=\lambda-D_{m}-B_{0}$, and
$T_{\lambda}:=\Sigma_{l\geq 0} (B_{1}L_{\lambda}^{-1})^{2l}$ is
considered as an element in $\mathcal{L}(h^{-m,n})$. Using the
Convolution Lemma (c) and the Lemma \ref{l_18} ($a'$), ($b'$) we
can find $n_{0}\in\mathbb{N}$ such that, for any $n\geq n_{0}$ and
$\lambda\in Vert^{m}_{n}(n^{m})$, the operator $L_{\lambda}$ is
invertible in the spaces $\mathcal{L}(h^{-m})$ and
$\mathcal{L}(h^{-m,n})$ in the form \eqref{eq_86}. Using the
Convolution Lemma (a) and the Lemma \ref{l_18} ($e'$), one can
obtain
\begin{equation*}\label{eq_96}
  \parallel B_{1}L_{\lambda}^{-1}
  \parallel_{\mathcal{L}(h^{-m,n},h^{-m,n})}\leq C_{m}\parallel
  (v_{1}+\tilde{w})\parallel_{h^{-m}}
  \parallel L_{\lambda}^{-1}\parallel_{\mathcal{L}
  (h^{-m,n},h^{-m,n})}=O(\varepsilon).
\end{equation*}
Therefore, if $\varepsilon> 0$ is small enough, the sum
$$T_{\lambda}=\Sigma_{l\geq 0} (B_{1}L_{\lambda}^{-1})^{2l}$$
converges in $\mathcal{L}(h^{-m,n})$. Then the representation
\eqref{eq_94} follows because $B_{1}L_{\lambda}^{-1}\in
\mathcal{L}(h^{-m},h^{-m,n})$ by the Convolution Lemma (a)  and
the Lemma \ref{l_18} ($d'$), and $L_{\lambda}^{-1}\in
\mathcal{L}(h^{-m,n},h^{-m})$ by the Lemma \ref{l_18} ($c'$).

Hence, for $\varepsilon\geq 0$, $M\geq 1$, and $n_{0}\in
\mathbb{N}$ as above, the inclusion \eqref{eq_80} holds. Let
remark, that in fact, we have proved the inclusion
\begin{equation}\label{eq_98}
  Ext_{M}\cup\bigcup_{n\geq
  n_{0}}Vert^{m}_{n}(n^{m})\subseteq\emph{Resol}
  (\hat{L}_{m}(w(s))),
\end{equation}
where $\emph{Resol}(\hat{L}_{m}(w))$ denotes the resolvent set of
the operator $\hat{L}_{m}(w)=D_{m}+B_{0}+sB_{1}$ for $0\leq s\leq
1$. Further, denoting the Riesz projector for $0\leq s\leq 1$ and
any contour $\Gamma$ in $Ext_{M}\cup\bigcup_{n\geq
n_{0}}Vert^{m}_{n}(n^{m})$,
\begin{equation*}\label{eq 80}
  P(s):=\frac{1}{2\pi i}\int_{\Gamma}
  (\lambda-D_{m}-B_{0}-sB_{1})^{-1}\,d\lambda\in
  \mathcal{L}(h^{-m}),
\end{equation*}
one concludes that the operators $D_{m}+B_{0}$ and
$D_{m}+B_{0}+B_{1}$ have the same number of eigenvalues (counted
with their algebraic multiplicity) inside $\Gamma$. To complete
the proof of Proposition \ref{pr_12} it is sufficient to apply
Proposition \ref{pr_10} to the operator $D_{m}+B_{0}$ with
$B_{0}=v_{0}\ast\cdot$ and $v_{0}\in h^{m}\subseteq h^{0}$.
\end{proof}
\begin{proposition}\label{pr_14} Consider the eigenvalue
problem $\hat{L}_{m}(v)f=\lambda f$ with $v$ in $h^{-m}$, and let
$R\geq 0$. For any $w\in h^{-m}$ with $$\parallel
w-v\parallel_{h^{m}}\leq R$$ the spectrum
\emph{spec}$(D_{m}+B(w))$ of the operator
$\hat{L}_{m}(w)=D_{m}+B(m)$ with $B(w):=w\ast\cdot$ consists of a
sequence $(\lambda_{k}(w))_{k\geq 0}$ ordered lexicographically of
complex-valued eigenvalues counted with their algebraic
multiplicity satisfies the asymptotic formula
\begin{equation*}\label{eq 102}
    \lambda_{2n-i}(m,w)=n^{2m}\pi^{2m}+o(n^{m}),\quad i=0,1,\quad
    n\rightarrow\infty
\end{equation*}
holds.
\end{proposition}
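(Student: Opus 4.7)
The plan is to sharpen the argument of Proposition \ref{pr_12} so that the radii of the vertical strips isolating spectral bands can be taken as $r_n = \varepsilon n^m$ with $\varepsilon > 0$ arbitrarily small, rather than the fixed $r_n = n^m$. Once the resolvent of $\hat L_m(w)$ is shown to exist on $Vert^m_n(\varepsilon n^m)$ for all $n \geq n_0(\varepsilon)$, uniformly in $w$, the pair $\lambda_{2n-1}(m,w), \lambda_{2n}(m,w)$ lies in the disc of radius $\varepsilon n^m$ around $n^{2m}\pi^{2m}$, which is precisely the $o(n^m)$ claim.

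Fix $\varepsilon > 0$. By density of $h^m$ in $h^{-m}$, decompose $v = v_0 + v_1$ with $v_0 \in h^m$ and $\|v_1\|_{h^{-m}} \leq \delta(\varepsilon)$, a small quantity to be chosen; the decomposition depends only on $v$ and $\varepsilon$, not on the perturbation. For any $w$ with $\|w - v\|_{h^m} \leq R$, set $w_0 := v_0 + (w - v) \in h^m$, so that $w = w_0 + v_1$ with
$$\|w_0\|_{h^m} \leq \|v_0\|_{h^m} + R =: N(\delta), \qquad \|v_1\|_{h^{-m}} \leq \delta.$$
Crucially, $N(\delta)$ is independent of $w$, isolating the $w$-dependence to a bounded set in $h^m$.

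I would then repeat the two-step perturbation argument of Proposition \ref{pr_12}, but with $r_n = \varepsilon n^m$ in place of $n^m$, tracking $\varepsilon$-dependence through Lemma \ref{l_18}. The diagonal bound $\|(\lambda - D_m)^{-1}\|_{\mathcal{L}(h^{-m,n})} = O(1/(\varepsilon n^m))$ combined with the Convolution Lemma applied to the smooth part $w_0 \in h^m$ yields invertibility of $L_\lambda := \lambda - D_m - B(w_0)$ on $h^{-m,n}$ once $n \geq n_0 := (c N(\delta)/\varepsilon)^{1/m}$, together with analogous bounds for $L_\lambda^{-1}$. Lemma \ref{l_18}$(e)$ gives the mixed-weight estimate $\|L_\lambda^{-1}\|_{\mathcal{L}(h^{-m,n}, h^{m,-n})} = O(1/\varepsilon)$, uniformly in $n$. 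Then, as in Proposition \ref{pr_12}, the perturbation $B(v_1)$ is absorbed: $\|B(v_1) L_\lambda^{-1}\|_{\mathcal{L}(h^{-m,n})} \leq C_m \delta / \varepsilon$, which is less than $1/2$ for the choice $\delta := \varepsilon / (4 C_m)$, and a Neumann series yields $\lambda \in \mathrm{Resol}(\hat L_m(w))$ on $Vert^m_n(\varepsilon n^m)$. A parallel (and simpler) argument covers $Ext_M$ for $M = M(\varepsilon)$ large. The eigenvalue count---exactly two per disc---follows from the same Riesz-projection homotopy $s \mapsto D_m + B(w_0) + s B(v_1)$ as in Proposition \ref{pr_12}, which at $s = 0$ reduces to Proposition \ref{pr_10} applied to $w_0 \in h^m \subset h^0$.

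The main obstacle is the cascade of dependencies $\varepsilon \to \delta(\varepsilon) \to N(\delta) \to n_0(\varepsilon, N)$: shrinking $\varepsilon$ forces a smaller $\delta$, which in turn typically inflates $\|v_0\|_{h^m}$ and hence $N(\delta)$, thereby raising $n_0$. The key observation is that these choices can be made sequentially without conflict: with $\varepsilon$ fixed first and $\delta, n_0$ chosen accordingly, all resolvent estimates become uniform in $w$ in the ball $\|w - v\|_{h^m} \leq R$. This yields $|\lambda_{2n-i}(m,w) - n^{2m}\pi^{2m}| < \varepsilon n^m$ for every $n \geq n_0(\varepsilon)$ uniformly in $w$, whence the $o(n^m)$ asymptotic.
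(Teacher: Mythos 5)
Your proposal is correct and follows essentially the same route as the paper: the same decomposition $w=(v_0+(w-v))+v_1$ with the $h^m$-part absorbed into $L_\lambda$ and the small $h^{-m}$-part $v_1$ handled by the Neumann/Riesz-projector argument on the shrunken strips $Vert^m_n(\delta n^m)$. Your explicit tracking of the dependency chain $\varepsilon\to\delta\to N\to n_0$, and your invocation of Proposition \ref{pr_10} (whose $O(1)$ disc radii are what actually beat $\delta n^m$ for large $n$) at the base of the homotopy, are both sound and if anything slightly cleaner than the paper's citation of Proposition \ref{pr_12} at that step.
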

\begin{proof}
Let $v\in h^{-m}$. Since the set $h^{m}$ is dense in the space
$h^{-m}$, one decomposes $$v=v_{0}+v_{1},$$ with $v_{0}\in h^{m}$
and $\parallel v_{1}\parallel_{h^{-m}}\leq\varepsilon$, where
$\varepsilon>0$ will be chosen bellow. We are going to show as
above that there exists $n_{0}\in\mathbb{N}$ depending on
$\parallel v_{0}\parallel _{h^{m}}$ and $R\geq 0$ such that, for
any $w=v+w_{0}\in h^{-m}$ with $\parallel w_{0}\parallel_{h^{-m}}
\leq R$,
\begin{equation}\label{eq_104}
  \bigcup_{n\geq n_{0}}Vert^{m}_{n}(n^{m}) \subseteq\emph{Resol}
  (\hat{L}_{m}(w)),
\end{equation}
where $\emph{Resol}(\hat{L}_{m}(w))$ denotes the resolvent set of
the operator $\hat{L}(w)=D_{m}+B_{0}+B_{1}$, and $B_{0}=
(v_{0}+w_{0}) \ast \cdot$, and $B_{1}=v_{1}\ast\cdot$. Notice,
that now we consider the strips $Vert^{m}_{n}(r_{n})$ with
$r_{n}=\delta n^{m}$ for some $\delta\in (0,1]$.

So, let $\lambda\in Vert^{m}_{n}(r_{n})$. Using the Convolution
Lemma and the Lemma \ref{l_18} (b) one gets
\begin{equation*}\label{eq_106}
  \parallel
  B_{0}(\lambda-D_{m})^{-1}\parallel_{\mathcal{L}(h^{-m,n})}
  \leq C_{m}\parallel (v_{0}+w_{0})\parallel_{h^{m}}
  \parallel(\lambda-D_{m})^{-1}\parallel_{\mathcal{L}(h^{-m,n})} =\frac{\parallel
  (v_{0}+w_{0})\parallel_{h^{m}}}{r_{n}}O(1).
\end{equation*}
Hence, for $n$ large enough and $\lambda\in Vert^{m}_{n}(r_{n})$,
\begin{equation*}\label{eq_108}
  L_{\lambda}:=\lambda-D_{m}-B_{0}
  =(Id-B_{0}(\lambda-D_{m})^{-1}) (\lambda-D_{m})
\end{equation*}
is invertible in $\mathcal{L}(h^{-m})$ with inverse
\begin{equation}\label{eq_110}
  L_{\lambda}^{-1}=(\lambda-D_{m})^{-1}
  (Id-B_{0}(\lambda-D_{m})^{-1})^{-1}.
\end{equation}
Further, for $n$ large enough, we can show that the following
representation  of  resolvent of the operator $\hat{L}_{m}(w)=
D_{m}+B_{0}+B_{1}$ converges in $\mathcal{L}(h^{-m})$,
\begin{equation}\label{eq_112}
  (\lambda-D_{m}-B_{0}-B_{1})^{-1}=L_{\lambda}^{-1}
  +L_{\lambda}^{-1}T_{\lambda} (B_{1}L_{\lambda}^{-1}) +L_{\lambda}^{-1}T_{\lambda}
  (B_{1}L_{\lambda}^{-1})^{2},
\end{equation}
where $L_{\lambda}=\lambda-D_{m}-B_{0}$, and
$T_{\lambda}:=\Sigma_{l\geq 0} (B_{1}L_{\lambda}^{-1})^{2l}$ is
considered as an element in $\mathcal{L}(h^{-m,n})$. Using the
Convolution Lemma  and the Lemma \ref{l_18} (e), we get
\begin{equation*}\label{eq_114}
  \parallel B_{1}L_{\lambda}^{-1}
  \parallel_{\mathcal{L}(h^{-m,n},h^{-m,n})}\leq C_{m}\parallel
  v_{1}\parallel_{h^{-m}} \parallel L_{\lambda}^{-1}\parallel _{\mathcal{L}
  (h^{-m,n},h^{-m,n})}=O(\varepsilon).
\end{equation*}
Hence, if $\varepsilon> 0$ is small enough, the sum
$T_{\lambda}=\Sigma_{l\geq 0} (B_{1}L_{\lambda}^{-1})^{2l}$
converges in the space $\mathcal{L}(h^{-m,n})$ and the
representation \eqref{eq 102} then follows because
$B_{1}L_{\lambda}^{-1}\in \mathcal{L}(h^{-m},h^{-m,n})$ by the
Convolution Lemma  and the Lemma \ref{l_18} (d), and
$L_{\lambda}^{-1}\in \mathcal{L} (h^{-m,n}, h^{-m})$ by the Lemma
\ref{l_18} (c).

Consequently, for some $\varepsilon\geq 0$ and $n_{0}\in
\mathbb{N}$ he inclusion \eqref{eq_80} holds for $r_{n}=\delta
n^{m}$, $\delta\in (0,1]$. Defining the Riesz projector for $0\leq
s\leq 1$ and any contour $\Gamma$ in $\bigcup_{n\geq n_{0}}
Vert^{m}_{n} (n^{m})$,
\begin{equation*}\label{eq 116}
  P(s):=\frac{1}{2\pi i}\int_{\Gamma}
  (\lambda-D_{m}-B_{0}-sB_{1})^{-1}\,d\lambda\in
  \mathcal{L}(h^{-m}),
\end{equation*}
one concludes that the operators $D_{m}+B_{0}$ and
$D_{m}+B_{0}+B_{1}$ have the same number of eigenvalues (counted
with their algebraic multiplicity) inside $\Gamma$. Applying
Proposition \ref{pr_12} to the operator $D_{m}+B_{0}$ with
$B_{0}=(v_{0}+w_{0})\ast\cdot$ and $v_{0}+w_{0}\in h^{m}\subseteq
h^{0}$ one gets:

the spectrum \emph{spec}$(D_{m}+B(w))$ of the operator
$\hat{L}_{m}(w)=D_{m}+B(w)$ with $B(w)=w\ast\cdot$ consists of a
sequence $(\lambda_{k}(m,w))_{k\geq 0}$ of complex-valued
eigenvalues, and for any $\delta\in (0,1]$ there exists
$n_{0}\in\mathbb{N}$ such that the pairs of eigenvalues
$\lambda_{2n-1}(m,w)$, $\lambda_{2n}(m,w)$ there are inside a disc
around $n^{2m}\pi^{2m}$,
\begin{equation*}\label{eq_118}
    |\lambda_{2n-i}(w)-n^{2m}\pi^{2m}|<\delta n^{m},\quad i=0,1.
\end{equation*}
So, we conclude that the sequence $(\lambda_{k}(m,w))_{k\geq 0}$
of eigenvalues satisfies the asymptotic formula \eqref{eq_30}.
\end{proof}
\section{Acnowledgement}
The first author (V.A.M.) was partially supported by NFBR of
Ukraine under Grants 01.07/027 and 01.07/00252.

\end{document}